\pgfplotsset{compat=newest}
\definecolor{RED}{rgb}{0.7,0,0}
\definecolor{BLUE}{rgb}{0,0,0.69}
\definecolor{GREEN}{rgb}{0,0.6,0}
\definecolor{PURPLE}{rgb}{0.69,0,0.8}
\newtheorem{theorem}{Theorem}   
\newtheorem{lemma}{Lemma}       
\newtheorem{proposition}{Proposition} 
\theoremstyle{definition}
\title{Adaptive control for multi-scale stochastic dynamical systems with stochastic next generation reservoir computing}
 \author{
 \normalsize{
Jiani Cheng$^{1,2,}$\footnotemark[2]\ ,
Ting Gao$^{1,2,3}$\footnotemark[1]\ ,
Jinqiao Duan$^{4,5,}$\footnotemark[3]
}\\[10pt]
\footnotesize{$^1$ School of Mathematics and Statistics, Huazhong University of Science and Technology, Wuhan, China} \\
\footnotesize{$^2$ Center for Mathematical Science, Huazhong University of Science and Technology, Wuhan, China} \\
\footnotesize{$^3$ Steklov-Wuhan Institute for Mathematical Exploration, Huazhong University of Science and Technology, China} \\
\footnotesize{$^4$ Department of Mathematics and Department of Physics, Great Bay University, Dongguan, China} \\
\footnotesize{$^5$ Guangdong Provincial Key Laboratory of Mathematical and Neural Dynamical Systems, Dongguan, China. }
}
\begin{document}
\maketitle
\begin{abstract}
The rapid advancement of neuroscience and machine learning has established data-driven stochastic dynamical system modeling as a powerful tool for understanding and controlling high-dimensional, spatio-temporal processes. We introduce the stochastic next-generation reservoir computing (NG-RC) controller, a framework that integrates the computational efficiency of NG-RC with stochastic analysis to enable robust event-triggered control in multiscale stochastic systems. The asymptotic stability of the controller is rigorously proven via an extended stochastic LaSalle theorem, providing theoretical guarantees for amplitude regulation in nonlinear stochastic dynamics. Numerical experiments on a stochastic Van-der-Pol system subject to both additive and multiplicative noise validate the algorithm, demonstrating its convergence rate across varying temporal scales and noise intensities. To bridge theoretical insights with real-world applications, we deploy the controller to modulate pathological dynamics reconstructed from epileptic EEG data. This work advances a theoretically guaranteed scalable framework for adaptive control of stochastic systems, with broad potential for data-driven decision making in engineering, neuroscience, and beyond.\\

\par\textbf{Keywords: }Stochastic Dynamical Systems, Next Generation Reservoir Computing, Adaptive Control, Multiscale, EEG

\end{abstract}

\section{Introduction}

The study of nonlinear oscillatory systems has broad applications in physics, biomedicine, and engineering\cite{Hayashi+1986, Mohazzabi2004TheoryAE, Rafiq2024CriticalAF}. Within biological and engineered systems, oscillatory behaviors often emerge due to intrinsic feedback mechanisms and external perturbations\cite{ashall2009pulsatile, gerard2009temporal, goldbeter2012systems}. Controlling such oscillatory dynamics is crucial in many domains, including stabilizing neural rhythms, regulating gene expression, and designing robust mechanical systems. Classical approaches, including bifurcation analysis\cite{Ajjarapu1991BifurcationTA} and Lyapunov stability theory, provide qualitative insights but face computational bottlenecks in high-dimensional stochastic systems. Besides, oscillatory dynamics often exhibits complex behaviors such as limit cycles\cite{Manchester2010TransverseDA}, quasiperiodicity\cite{Zanotti2002QuasiperiodicAA}, and chaos\cite{Belhaq1999QuasiPeriodicOC}, making traditional control methods difficult to apply.

Real-world systems are inherently stochastic, where noise plays a fundamental role in shaping their dynamical behavior\cite{xu2024data}. Thus, the challenge of designing controllers for stochastic dynamical systems has become a central problem, particularly in contexts where deterministic models fail to capture essential characteristics of the system. Stochastic control methods, including those based on stochastic stability theory\cite{Zhu2020StabilityAF} and convex optimization\cite{Bai2020safety}, have been developed to address these problems. Recently, the Schrödinger bridge has also been investigated to control dynamics between meta-stable invariant sets\cite{Zhang2024ActionFA}. In addition, many oscillatory systems exhibit multiple time scales, where slow and fast dynamics interact non-trivially. Although machine learning methods such as neural SDEs\cite{kidger2021neural, kidger2021efficient} and reservoir computing (RC)\cite{tanaka2019recent, canaday2021model} have been applied to stochastic modeling, their potential in adaptive control remains underexplored. In particular, next-generation reservoir computing (NG-RC)\cite{gauthier2021next} has demonstrated superior efficiency in learning dynamical systems, but has seen limited applications in SDE control. This gap highlights the need for an adaptive control framework capable of effectively handling noise and multi-scale dynamics.

In this work, we propose a novel noise-adapted stochastic next-generation reservoir computing (S-NGRC) controller to address these challenges. Our approach integrates NG-RC with stochastic control theory, providing a data-driven framework for trajectory tracking in stochastic dynamical systems. The stochastic NG-RC controller leverages a closed-loop feedback mechanism to infer and adjust the control input, ensuring precise convergence to target trajectories. Theoretical guarantees on control error convergence are established through the stochastic LaSalle theorem, offering a rigorous foundation for adaptive control in stochastic oscillatory systems. Furthermore, hardware implementations of NG-RC\cite{kent2024controlling, kent2024controlling2} have demonstrated its efficiency in chaotic system control, suggesting its potential for real-time applications. Our framework advances the state-of-the-art by bridging machine learning-based control with stochastic stability analysis, offering a promising direction for applications in complex biomedical and engineering systems.

The remainder of this paper is structured as follows. In Section 2, we review related work on data-driven control methods for complex systems. In Section 3, we present the design of the stochastic NG-RC controller, formulated under the stochastic differential equation framework, and establish the coupling mechanism between the state space of the controlled system and the reservoir. In Section 4, we analyze the control stability by deriving a stochastic LaSalle theorem based on the Lyapunov stability theory and prove the asymptotic convergence of the control error. And in Section 5, experimental results are reported, including robustness verification on stochastic Van-der-Pol systems under different noise types, intensities, and time scales, as well as an empirical study on seizure suppression using EEG data. Finally, in Section 6, we summarize the key contributions of this work in data-driven control theory, stability analysis of stochastic controlled systems, and biomedical applications, discuss current limitations, and outline directions for future research.

\section{Related work}

In recent years, the core issue of frequency modulation (FM) and amplitude modulation (AM) as the heart of oscillatory system control has been continuously deepened in research in biology, physics, and engineering. During learning processes, the hippocampus and the inferior temporal cortex of the brain exhibit a significant improvement in the amplitude of theta rhythms\cite{colgin2013mechanisms} while maintaining stable frequencies, which is a typical AM phenomenon. For instance, intermittent theta burst stimulation (iTBS) in the cerebellum can improve walking function in stroke patients, and its mechanism may be related to the amplitude regulation of corticospinal excitability\cite{di2008physiological}. Ge et al. (2014)\cite{ge2014achieving} based on the Andronov-Hopf bifurcation theory, mapped the system to a complex space through linear feedback, explicitly correlating parameters with frequency/amplitude relationships, but limited to a single regulatory target. Qin et al. (2021)\cite{qin2021frequency} pointed out that amplitude regulation depends on the transfer of energy between different modes (such as the redistribution of ionic channel energy in neuronal discharge), while frequency regulation is related to the timing characteristics of signal transmission. This discovery explains the phenomenon that AM/FM may be regulated through independent pathways in biological systems. Although experiments have confirmed that AM/FM can be independently regulated, it remains unclear whether they share underlying mechanisms (such as feedback loop structures). For example, the synergistic action of positive and negative feedback in the nervous system may dominate FM, while AM is more dependent on a single feedback or external input\cite{qin2021frequency}. Moreover, the impact of noise in biological systems (such as the random firing of neural activity) on AM/FM regulation has not yet been fully studied. It is necessary to integrate stochastic control theory (such as the Fokker-Planck equation) to enhance the robustness of controllers.

Data-driven non-linear system modeling and control methods have attracted considerable attention in the study of complex dynamic systems. Among them, Sparse Identification of Nonlinear Dynamics (SINDy) utilizes innovative sparse regression techniques to select key nonlinear terms from a high-dimensional library of candidate functions, constructing a concise explicit dynamical equation. This approach exhibits unique advantages in terms of model interpretability and computational efficiency\cite{Papageorgiou2024SlidingmodeCO}. The framework has been successfully extended into the field of control: Brunton et al. (2016)\cite{Brunton2016SparseIO} pioneered the SINDy control theory framework, and subsequent research has further integrated it with Model Predictive Control (MPC), achieving robust tracking of complex systems through online optimization\cite{Zhang2018OnTC,Wang2023AnIK}. This has demonstrated a significant advantage of reducing computational time by 30$\%$ compared to traditional methods in robotic arm control\cite{Guevara2024PDBasedAS}.

Similarly to SINDy's need for explicitly constructing a library of basis functions, the next-generation reservoir computing (NG-RC) builds high-order polynomial basis functions as reservoir states and directly learns the discrete state transition mapping using ridge regression. However, there is an essential difference in methodology: SINDy focuses on identifying dynamical models in the form of differential equations from continuous-time data, with the sparse optimization process explicitly eliminating redundant terms through algorithms like LASSO\cite{Brunton2015DiscoveringGE}. In contrast, NG-RC directly characterizes system evolution through discrete state transition mapping and implicitly compresses the weights of basis functions using ridge regression. In terms of real-time performance, NG-RC, with its single-step inference property, is more suitable for embedded control scenarios, while the combination of SINDy and MPC requires an online solution of optimization problems. Notably, NG-RC\cite{gauthier2021next} introduces historical states to enhance the representation of temporal dynamics, echoing the phase-space reconstruction of Takens' embedding theorem\cite{noakes1991takens}. SINDy, on the other hand, is more dependent on prior knowledge of the physical properties of the system through the candidate function library\cite{Rudy2016DatadrivenDO}. These two methods also present complementary features in terms of noise robustness. SINDy is sensitive to data noise and requires preprocessing, while NG-RC naturally suppresses noise interference through regularization mechanisms. This difference makes SINDy more advantageous in scenarios that require precise dynamical analysis, such as soft robot control\cite{Papageorgiou2024SlidingmodeCO}, while NG-RC performs better in real-time control tasks with a high demand for rapid response.

\section{Controlling stochastic dynamical systems via S-NGRC }

A typical stochastic differential equation(SDE) can be expressed as follows:
\begin{equation*}
d X_t = f\left(X_t\right) d t + g\left(X_t\right) d W_t, \quad X_0 = x_0,
\end{equation*}
where the $N$-dimensional vector function $f(\cdot)$ represents the drift term, the $N \times N$ matrix function $g(\cdot)$ represents the diffusion term, and $W_t$ denotes an $N$-dimensional Brownian motion.
Naturally, we need to exert control over both the deterministic and stochastic components at the same time.
\begin{equation}\label{sde1}
d X_t = [f\left(X_t\right)+u_1(X_t)] d t + [g\left(X_t\right)+u_2(X_t)] d W_t, \quad X_0 = x_0.
\end{equation}
Here $u_1$ and $u_2$ are non-linear feedback controls related to the state $x(t)$ of the system learned by the data-driven approach. In the following, we present our method in two steps.

\subsection{Step 1: learning data dynamics via stochastic NG-RC}
\label{sec:model}

Consider a simplified nonlinear stochastic differential equation (SDE)
\begin{equation}\label{sde2} 
d X_t = \left( f\left(X_t\right)+u_t \right) d t + \sigma d W_t, \quad X_0 = x_0, 
\end{equation}
where $\sigma$ is a constant matrix and $u_t = u(X_t)$ represents the controller. Here, we simplify the diffusion term in \eqref{sde1}, retaining only the effect of the external input $u_t$ on the drift term. The time interval $[0, T]$ is discretized into equally spaced subintervals $K$ with a step size of $\Delta t = T / K$, where discrete time instants are denoted as $t_i = i \Delta t$ for $i = 0,1,\dots,K$. Using the Euler-Maruyama discretization scheme, \eqref{sde2} is approximated as  
\begin{equation}\label{sde3}
X_{i+1} = X_i + \left[f\left(X_i\right) + u_i \right] \Delta t + \sigma \sqrt{\Delta t} \xi_i,
\end{equation}
where $X_i \in \mathbb{R}^n$ is the system state at time $t_i$. $u_i \in \mathbb{R}^n$ is the control input, and $\xi_i \sim \mathcal{N}\left(0, I_N\right)$ is an independent and identically distributed (i.i.d.) standard Gaussian noise term. To ensure numerical precision, the size of the  the discretization step must satisfy the constraint $\Delta t < \frac{1}{\|\nabla f(X)\|_{\max}}$, where $\|\nabla f(X)\|_{\max}$ represents the maximum spectral norm of the Jacobian matrix of the drift term.  

\begin{figure}[htbp]
    \centering
    \includegraphics[width=\textwidth]{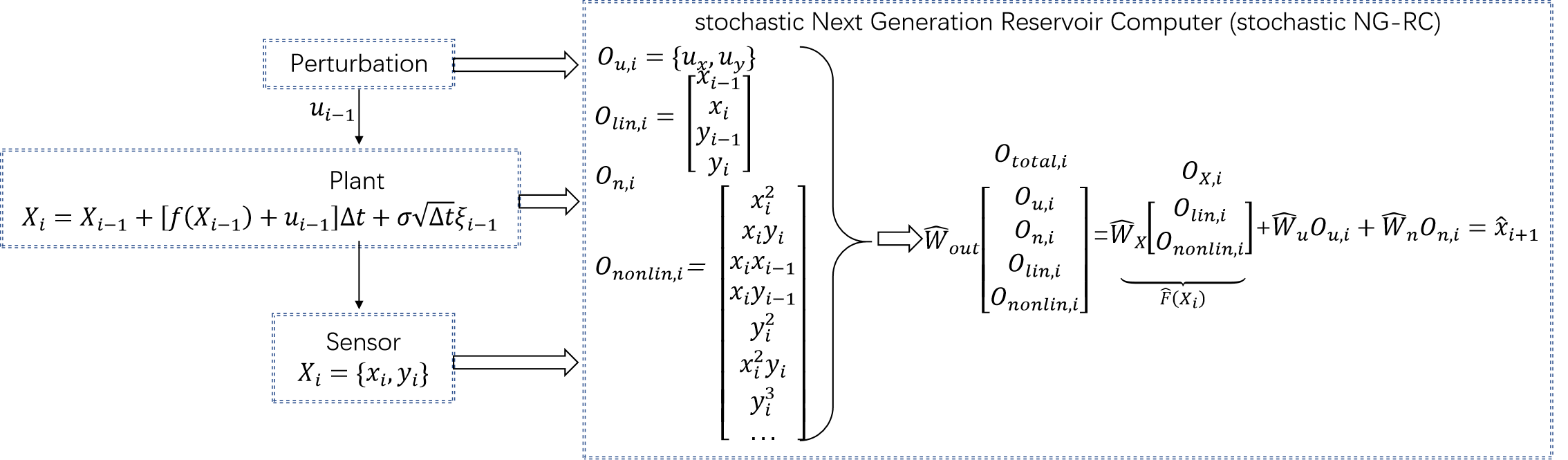}
    \caption{Stochastic NG-RC framework diagram (take 2D sde as an example).}
    \label{fig:NG-RC}
\end{figure}

Following the nonlinear control framework for discrete-time systems developed in \cite{sarangapani2018neural}, we abstract system \eqref{sde3} as  
\begin{equation}\label{ngrc1}
\mathcal{X}_{i+1} = \mathcal{F}\left(\mathcal{X}_i, u_i, \sigma \right),
\end{equation}
where $\mathcal{F}$ is a mapping that describes the controlled $N$-dimensional system. In the case of full observation, the system output $Y_t$ is equivalent to the system state $X_t$. Based on the structure of the drift term in the SDE, we construct a feature vector $O_{X,i}$ that captures the system state by including monomials of different orders. The feature vector consists of a linear term, $O_{lin,i} = \{X_i, X_{i-1}\}$ \cite{kent2024controlling2}, and a nonlinear term $O_{nonlin,i}$ (including cross-products up to the third order). In this study, we only consider the effect of the previous and current state on the next state. However, as shown in \cite{gauthier2021next}, delay embeddings spanning multiple time steps can also be considered. The external input term $u_t$ and the noise term $\sigma \sqrt{\Delta t} \xi$ are incorporated into the feature vectors, denoted as $O_u$ and $O_n(\Omega)$, respectively.  

Based on Eq. \eqref{ngrc1}, the structure of the stochastic NG-RC model (illustrated in Fig. \ref{fig:NG-RC}) is given by  
\begin{equation}\label{ngrc2}
\mathcal{X}_{i+1} = W_X O_{X,i} + W_u O_{u,i} + W_n O_{n,i},
\end{equation} 
where $O_{u,i} = u_i \in \mathbb{R}^n$, and $W_X, W_u, W_n$ are the weight matrices associated with the three feature vectors.  

During the training phase of the NG-RC controller, the external input $u_t$ is a predesigned perturbation, allowing NG-RC to learn how control signals influence the state of the system in \eqref{sde2}. The feature vectors on the right-hand side of Eq. \eqref{ngrc2} can be combined into a single vector, $O_{total,i} = \{O_{X,i}, O_{u,i}, O_{n,i}\}$ (as illustrated in Fig. \ref{fig:NG-RC}), leading to the compact formulation  
\begin{equation}
\mathcal{X}_{i+1} = W_{out} O_{total}.
\end{equation}  
Similar to traditional reservoir computing, the output weights of the stochastic NG-RC model, $W_{out}$, are obtained via Tikhonov regularization
\begin{equation*}
W_{out} = X \mathbb{O}_{total}^{\mathrm{T}}\left(\mathrm{O}_{total} \mathrm{O}_{total}^{\mathrm{T}}+\alpha \mathbf{I}\right)^{-1},
\end{equation*}
where $X = [X_0, X_1, \dots, X_K]$, and $\alpha$ is a ridge regression parameter that regulates the model’s generalization performance. At this stage, the noise feature vector $O_n(\Omega)$ serves the same role as the random term in the Euler-Maruyama discretization process.  

A key distinction between the proposed stochastic NG-RC model and the traditional NG-RC framework is the explicit incorporation of the noise feature vector $O_n$. When addressing the control of SDEs, one natural consideration is controlling the diffusion term. In the simplified system \eqref{sde2}, we do not explicitly derive the mathematical structure of the stochastic NG-RC under the influence of $u_2$ in system \eqref{sde1}. However, numerical experiments demonstrate that the proposed model remains robust even in the presence of multiplicative noise.

\subsection{Step 2: designing control law for stochastic dynamical systems}

After training the NG-RC controller, we define the tracking error at time step $i+1$ as  
\begin{equation*}
e_{i+1} = \hat{X}_{i+1} - X_{des,i+1},
\end{equation*}
where $X_{des}$ denotes the desired trajectory, and $\hat{X}_{i+1}$ represents the observed system state after applying the control input $u_i$ generated by the stochastic NG-RC controller at time step $i$. Assuming that $\hat{X}_{i+1}$ follows the dynamical behavior governed by Eq. \eqref{ngrc2}, we substitute Eq. \eqref{ngrc2} into the above expression, yielding  
\begin{equation}\label{e}
    e_{i+1} = W_X O_{X,i} + W_u O_{u,i} + W_n O_{n,i} - X_{des,i+1}.
\end{equation}
To achieve the desired control, assuming 
\begin{equation*}
    e_{i+1} = K e_i,
\end{equation*}
we design the control signal as  
\begin{equation}\label{u}
O_{u,i} = W_u^{-1} \left[ X_{des,i+1} - W_X O_{X, i} - W_n O_{n,i} + K e_i \right].
\end{equation}
To ensure error convergence, the spectral radius of the closed-loop gain matrix $K$ must satisfy $\rho(K) < 1$. However, in dynamical systems, the control error $e_t$ should also exhibit appropriate dynamic behavior. In the following, we analyze the nonlinear controlled stochastic differential equation with respect to its calibration and error convergence conditions.

\section{Stablity analysis}
Lyapunov stability theory is a fundamental tool for analyzing the stability of dynamical systems, particularly in the study of nonlinear systems. While classical Lyapunov stability conditions are typically applied to equilibrium points, stochastic differential equations (SDEs) often exhibit more complex invariant structures, such as limit cycles and quasi-periodic orbits. To address this, we employ LaSalle’s theorem \cite{Lasalle1976TheSO}, an extension of Lyapunov’s method, which allows stability analysis through the study of invariant sets.

In this work, we extend the stochastic LaSalle theorem to controlled SDEs, establishing sufficient conditions for trajectory convergence under noise. Our results provide a theoretical foundation for the proposed stochastic NG-RC controller, ensuring that the controlled system asymptotically approaches the desired trajectory. The detailed mathematical derivations and proofs are presented in Supplementary material~\ref{Supplementary material:stability}.

\section{Experiments and results}



\subsection{Adaptive control of multi-scale stochastic dynamical systems}

Event-Triggered Control (ETC) is an intelligent control strategy whose core idea is to dynamically decide the moment of control update based on the change of system state or error signals, instead of executing control calculations according to a fixed time interval.Compared with the traditional Time-Triggered Control (TTC), ETC can reduce the number of control updates under the premise of guaranteeing the system performance, thus saving computational resources and communication bandwidth, and improving the real-time and robustness of the control system. The main categories of ETC include Static ETC and Dynamic ETC \cite{tang2022periodic}. Static ETC sets the trigger conditions through a fixed error threshold, such as the trigger strategy based on Lyapunov stability analysis, with typical rules such as $\|e(t)\| \geq \sigma$, and triggers the control update when the error exceeds the preset threshold $\sigma$. Dynamic ETC \cite{yi2018dynamic}, on the other hand, allows adaptive adjustment of the state value over time or the state of the system, such as self-triggered control (STC) and predictive ETC, where STC anticipates future trigger moments and thus determines the next trigger at the time of the last control execution, and predictive ETC combines model prediction methods to optimize the timing of control updates.

In complex nonlinear systems such as neural dynamics or oscillatory models, certain undesired states—like epileptic seizures—can be viewed as metastable attractors that the system may drift toward under external perturbations. To prevent such transitions, we incorporate insights from bifurcation theory and event-triggered control (ETC) to construct a control framework that actively monitors and regulates the system near critical thresholds. Instead of continuously applying control inputs at fixed intervals, ETC triggers interventions only when the system state approaches a bifurcation boundary or the tracking error exceeds a predefined margin. This selective activation enables both stability assurance and computational efficiency in managing critical transition risks.Key components of this approach include modeling the underlying bifurcation structures, designing hybrid triggering conditions, performing Lyapunov-based stability analysis, and dynamically adjusting the safety margin based on the system's evolving behavior.

To verify the effectiveness of the proposed data-driven controller with stochastic NG-RC, we use a multi-timescale stochastic Van der Pol oscillator\eqref{VDPadd} as a benchmark. To further study the interactive effects of multi-timescales and noise intensity on the performance of the controller, experiments are carried out under both the additive noise and multiplicative noise. Specifically, for additive noise, we divide the analysis into three subcategories: (1) small noise without multiple time scales, (2) comparison with the traditional NG-RC model, and (3) robustness analysis under varying noise intensity and multiple-time-scale effects.


\subsubsection{Case 1: Multi-scale SDEs under additive noise}
\label{sec:experiment}

Consider the following stochastic Van der Pol system with multiple time scales
\begin{equation}\label{VDPadd}
\left\{\begin{array}{l}d x=-y d t+\sigma_1 d W_t^1, \\ d y=\frac{1}{\varepsilon}\left(y-\frac{1}{3} y^3+x\right) d t+\frac{\sigma_2}{\sqrt{\varepsilon}} d W_t^2.\end{array}\right. 
\end{equation}
where $\varepsilon\leq1$ is a constant representing the time scale, $x_t \in \mathbb{R}$ is the slow variable, and $y_t \in \mathbb{R}$ is the fast variable. From equation \eqref{sde2}, the corresponding controlled SDE can be obtained
\begin{equation}\label{cVDPadd}
\left\{\begin{array}{l}d x=[-y+u_1] d t+\sigma_1 d W_t^1, \\ d y=[\frac{1}{\varepsilon}\left(y-\frac{1}{3} y^3+x\right)+u_2] d t+\frac{\sigma_2}{\sqrt{\varepsilon}} d W_t^2.\end{array}\right. 
\end{equation}

\paragraph{Fast modulation in low-intensity noise without multiple time scales:}
In order to clearly demonstrate the automatic identification of the controller and fast response to triggered event, the noise intensity $\sigma_1,\sigma_2 = 0.1$ and the time scale $\epsilon = 1$ are taken. The initial values obey the uniform distribution, that is, $x_0 \sim \operatorname{Uni}(-2,2)$ and $y_0 \sim \operatorname{Uni}(-2,2)$, are randomly selected in the bounded domain. 

\begin{figure}[htbp]
    \centering
    \includegraphics[scale = 0.6]{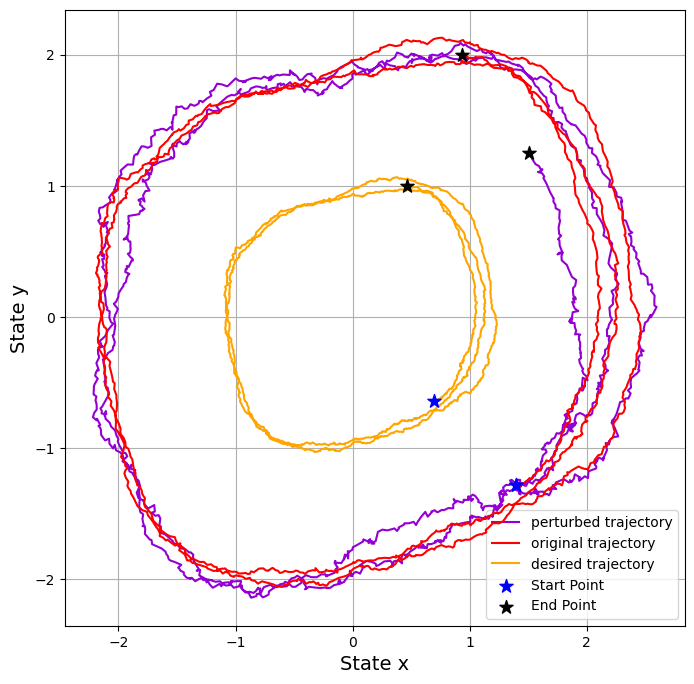}
    \caption{Phase portrait of the original, perturbed, and desired trajectories under low-intensity noise without multiple time scales. (Red) Original trajectory. (Yellow) A simplified desired trajectory used to illustrate the essence of the control problem (not the actual desired trajectory used in the experiment). (Purple) Perturbed trajectory, which serves as training data for the stochastic NG-RC controller to learn the influence of the external input $u$ on the stochastic dynamical system. Blue stars indicate the initial points of the three trajectories after discarding the first 500 sample points. Black stars indicate their respective termination points."}
    \label{fig:2d_add data}
\end{figure}

To illustrate our experimental goal, we first consider a simple setup that captures the essence of our control task (as shown in Fig.\ref{fig:2d_add data}). An Euler-Maruyama discretizations is performed on the system \eqref{VDPadd} to obtain a 2D trajectory of the Original data with 2000 sample points, denoted as the \textbf{original trajectory}. A \textbf{desired trajectory} is then designed as a modification of this original trajectory. Then we introduce a randomly generated perturbation into the discretized SDE, producing the \textbf{perturbed trajectory}. The objective of the controller is to track the desired trajectory despite stochastic perturbations and unknown trajectory transitions. 
In particular, the "Start Point" in Fig.\ref{fig:2d_add data} represents the 500th sample point because we are more concerned with the sudden change of the system in the steady state.

In the following, we split the control task into two steps based on the dynamic evolution and control law design of the stochastic NG-RC model.

\textbf{Step 1: learning data dynamics via stochastic NG-RC.} To train a stochastic NG-RC controller, two independent perturbation signals $u_x, u_y$ (the \textbf{top-right} panel of Fig.\ref{fig:2d_add train}) are randomly generated and injected into the numerical iteration of the controlled system in equation \eqref{cVDPadd}. The resulting perturbed trajectory is shown in the left panel of Fig.\ref{fig:2d_add train}. We design a desired trajectory incorporating two transition phases with distinct modulations of amplitude: outward expansion and inward contraction, as depicted in the left panel of Fig.\ref{fig:2d_add train}. This design evaluates the adaptability of the stochastic NG-RC controller under different amplitude modulations and assesses its effectiveness across varying dynamical regimes. In addition, it simulates complex control requirements found in real-world systems.

\begin{figure}[htbp]
    \centering
    \includegraphics[width=\textwidth]{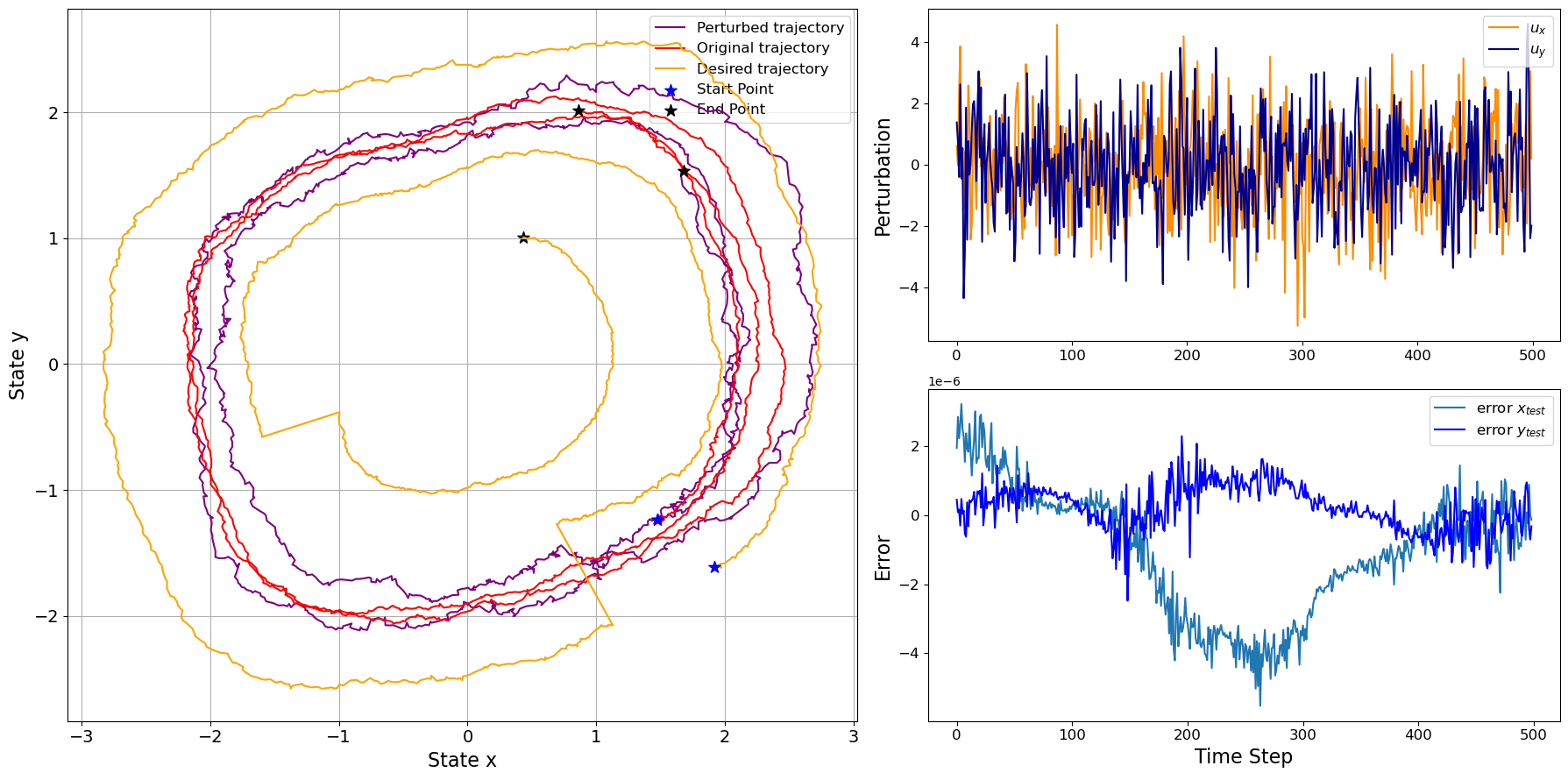}
    \caption{Training results for stochastic NG-RC in low-intensity noise scenarios without multiple time scales.
    The Fig.consists of three subplots. The \textbf{left} panel displays the phase portrait of the original, perturbed, and designed desired trajectories under low-intensity noise without multiple time scales. Blue stars indicate the starting points of the trajectories after discarding the first 500 sample points, while black stars mark their termination points. The \textbf{top-right} panel shows the randomly generated perturbation signals $u_x, u_y$ and the \textbf{bottom-right} panel presents the stepwise prediction error of the stochastic NG-RC model over 500 test samples.}
    \label{fig:2d_add train}
\end{figure}

The left panel of the Fig.\ref{fig:2d_add train} illustrates the interplay between the three trajectories. For clarity, we discard the initial 500 sample points and display only the subsequent 1500 stabilized samples. The (red) original trajectory exhibits a stable limit cycle under low-intensity noise without multiple time scales. The (purple) perturbed trajectory oscillates around the original system's limit cycle but does not inherently cover the steady states that the (yellow) desired trajectory follows. We partition the 1500 perturbed samples into 1000 training samples and 500 test samples. 

Through training, we obtain the optimal ridge regression parameter $\alpha = 7.7426\times 10^{-7}$, and The predicted root mean squared error(RMSE) for the test samples is $2.2412\times 10^{-6}$. The \textbf{bottom-right} panel of the Fig.\ref{fig:2d_add train} only shows how well the stochastic NG-RC model predicts the completely unknown test data and the prediction error on each time step (order of magnitude in $10^{-6}$).

\textbf{Step 2: designing control law for stochastic dynamical systems. } Based on the desired trajectory (illustrated in yellow in Fig. \ref{fig:2d_add train}), the system's external inputs (feedback control signals) $u^x$ and $u^y$ are derived by inverse-solving the stochastic NG-RC controller. The following analysis presents the control performance of the stochastic NG-RC controller under low-intensity noise, without the influence of multiple time scales.

\begin{figure}[htbp]
    \centering
    \includegraphics[width=\textwidth]{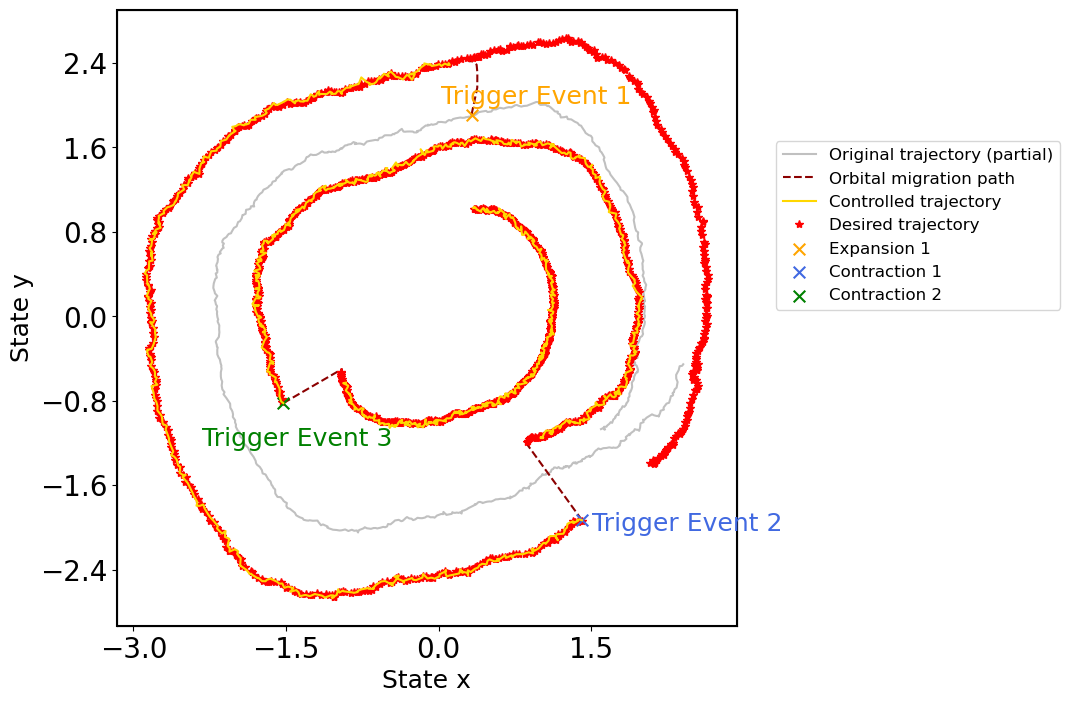}
    \caption{Phase-space tracking performance of the stochastic NG-RC controller in low-intensity noise scenarios without multiple time scales.\\
    a) Grey curve: Original Van der Pol (VDP) system trajectory without control.\\
    b) Yellow marker "X": Initial control activation point(1.2× the original amplitude). \\
    c) Dark red dashed line: Controlled trajectory converging toward desired trajectory (red).\\
    d) Blue and green markers "X": Trigger events for trajectory contraction (0.8× and 0.5× amplitude scaling, respectively).}
    \label{fig:2d_add control1}
\end{figure}

As depicted in Fig. \ref{fig:2d_add control1}, the controller initiates tracking from the yellow marker "X" (control start). Within 1-2 time steps, the controlled system rapidly approaches the desired trajectory (red) along the dark red dashed line, under the stochastic NG-RC controller. Detailed temporal evolution of control signals and error profiles are provided in Supplementary material Fig. \ref{fig:2d_add control}, including time-stamped trajectory transitions (black dashed lines), variable-specific tracking results ( $x$ : blue vs. $y$ : gold curves), and control error dynamics $\left(e_i \in \mathbb{R}^2\right)$ across the entire process.

The desired trajectory reflects the principles of the Self-ETC, wherein the controller operates without prior knowledge of exact timing for trajectory transitions. As shown in Supplementary material Fig. \ref{fig:2d_add control}, the controller first tracks an outward expansion (1.2× the original amplitude). Subsequently, it undergoes two unknown transitions—first to 0.8 times and then to 0.5 times of the original amplitude. These two contractions serve as \textbf{Trigger Event 2} and \textbf{Trigger Event 3}, respectively.
At these critical moments, the controller detects trajectory deviations by continuously calculating the control error $e_i=x_i-x_{\mathrm{des}, i}$. Once the error exceeds a threshold, the controller adjusts the input to ensure adaptive tracking. This error-driven mechanism embodies event-triggered control, offering both flexibility and efficiency while maintaining robustness amid uncertain trajectory changes.

The third row of Fig. \ref{fig:2d_add control} reveals that when a large discrepancy exists between the desired trajectory and the current system observation (akin to a full-observation system state), the controller outputs a significant control signal. We observe the control signals at the moments corresponding to the three black dashed lines: $u_{t_1} = (12.82,25.77),u_{t_2} = (-45.54,77.73), \text{and\ } u_{t_3} = (59.33,24.76)$. Later control signal magnitudes for these instances fall within the interval$ [-3, 15]$. In addition, the fourth row of Fig. \ref{fig:2d_add control} presents the control error at each moment, with an overall $RMSE = 0.1654$ for the entire control period. These results demonstrate that the stochastic NG-RC controller can rapidly adjust to changes in the control objective and accurately steer the system towards the desired trajectory within the first one to two steps, under low-intensity noise without multiple time scales.

\paragraph{Comparative Experiments with Conventional NG-RC Controllers}

As outlined in section \ref{sec:model}, a key design of the stochastic NG-RC controller is the incorporation of noise feature vector. The following presents a comparison of the conventional NG-RC with the stochastic NG-RC in system \eqref{cVDPadd}, using noise intensities $\sigma_1 = 1,\sigma_2 = 2$ and time scale $\epsilon = 0.5$, respectively.
\begin{figure}[htbp]
    \centering
    \includegraphics[width=\textwidth]{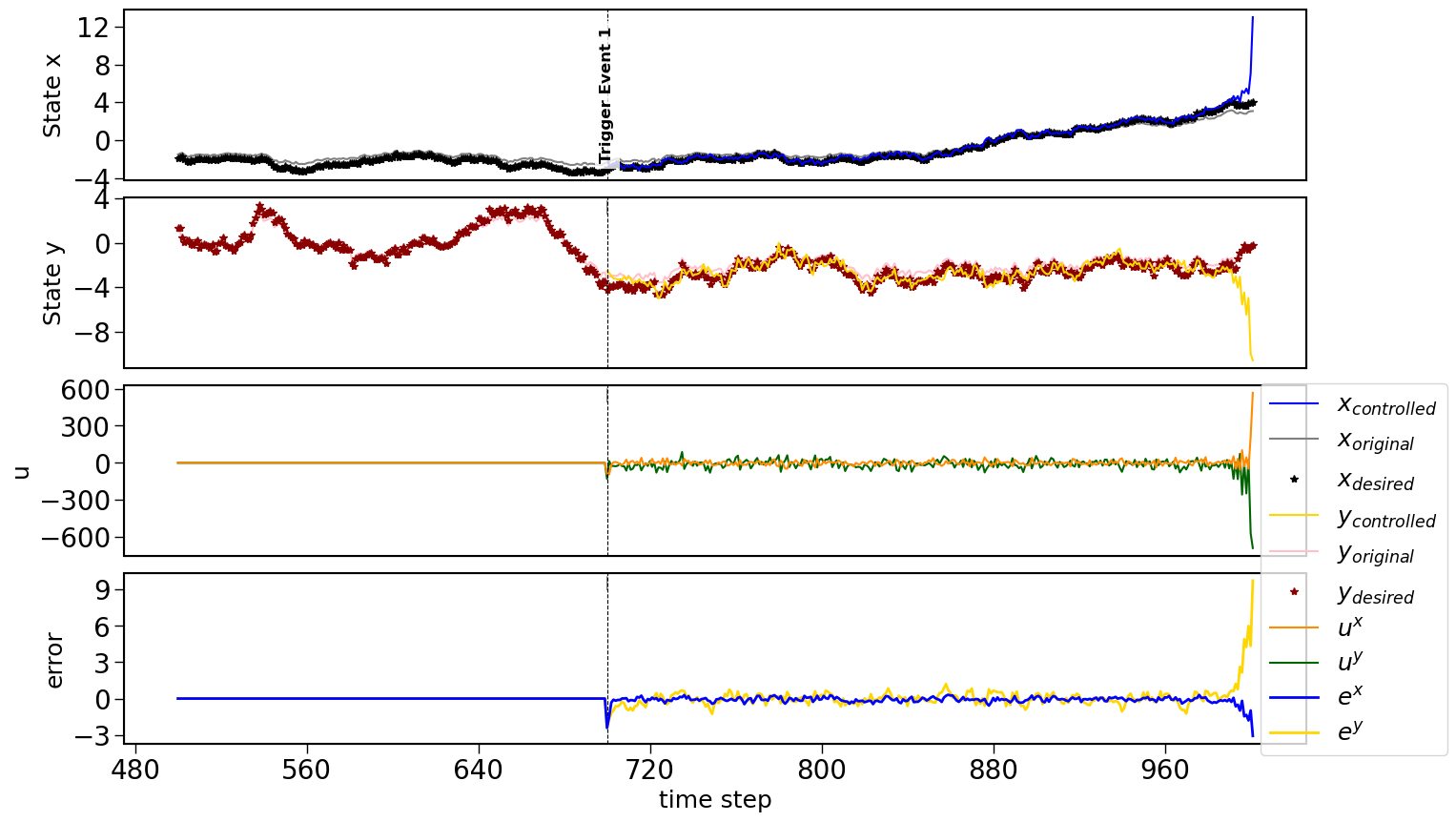}
    \caption{Control results of the conventional NG-RC at $\sigma_1 = 1,\sigma_2 = 2,\epsilon = 0.5$. The controller diverges at the 302th step after the start of control.}
    \label{fig:baseline}
\end{figure}
\begin{figure}[htbp]
    \centering
    \includegraphics[width=\textwidth]{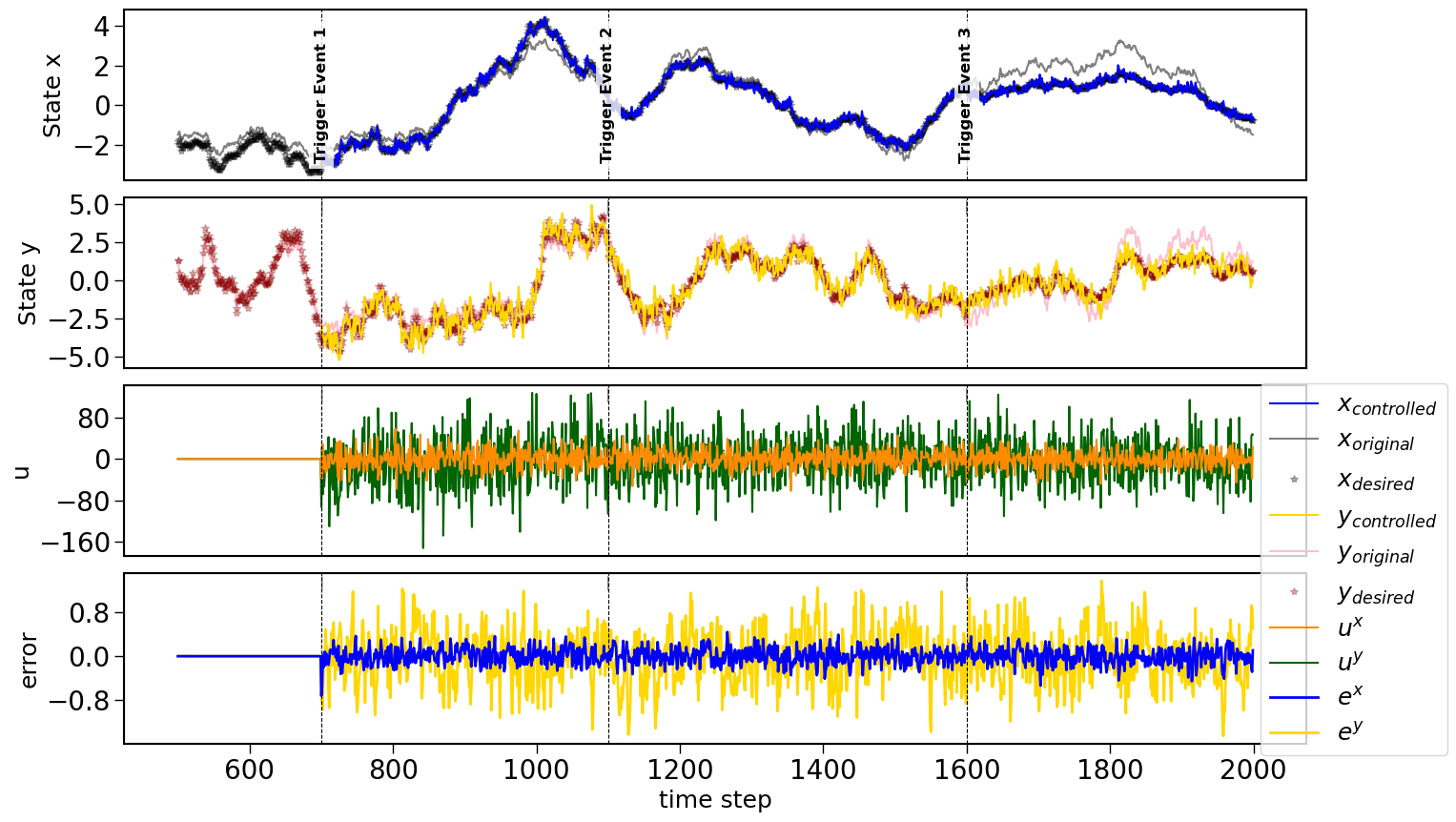}
    \caption{Control results of the stochastic NG-RC at $\sigma_1 = 1,\sigma_2 = 2,\epsilon = 0.5$. The control signal remains within the interval [-160,160].}
    \label{fig:add_1_2}
\end{figure}

As shown in Fig. \ref{fig:baseline} and Fig. \ref{fig:add_1_2}, when tested with system parameters $\sigma_1 = 1,\sigma_2 = 2,\epsilon = 0.5$, the conventional NG-RC controller diverges at the 302th time step, while the signal outputed from the stochastic NG-RC controller maintains within the range of [-160,160]. The overall control performance, evaluated by the total $RMSE = 0.3632$, shows that the control error remains within [-1.6, 1.6] for the entire interval, as seen in the fourth row of Fig. \ref{fig:add_1_2}. The desired trajectory is the same as in the previous experiment, featuring a deflation of the original trajectory. Notably, under the influence of larger noise intensities and multiple time scales, the primary cause of the discrepancy between the desired and controlled trajectories is no longer due to the design of the desired trajectory itself. This is evident in the control signals $u^x, u^y$, as seen by comparing the third row of Figure \ref{fig:2d_add control} with Figure \ref{fig:add_1_2}.

It is worth noting that the conventional NG-RC used for comparison employs a one-step delay embedding in its feature vector $O_{\text {total }}$, which includes the linear term $O_{\operatorname{lin}, i}= \left\{x_{i-1}, y_{i-1}, x_i, y_i\right\}$. A more sophisticated NG-RC design incorporating higher-order delay embeddings could potentially enhance control performance; however, this direction remains beyond the scope of the present work.

\paragraph{Robustness Analysis of the Controller to Noise Intensity and Time Scales}

In the previous subsection, we demonstrated through experimental results that the conventional NG-RC controller loses its robustness when subjected to high noise intensity and multiple time scales. This raises the important question: how do noise intensity and time scales affect the control performance of the stochastic NG-RC model in a controlled SDE system (\ref{cVDPadd}) with additive noise? To explore this, we conducted experiments with twelve different system parameter settings, repeating each experiment more than five times. The resulting RMSE values were averaged and presented in Fig. \ref{fig:noise_robustness} and Fig. \ref{fig:noise_robustness1}.

\begin{figure}[htbp]
    \centering
    \includegraphics[width=\textwidth]{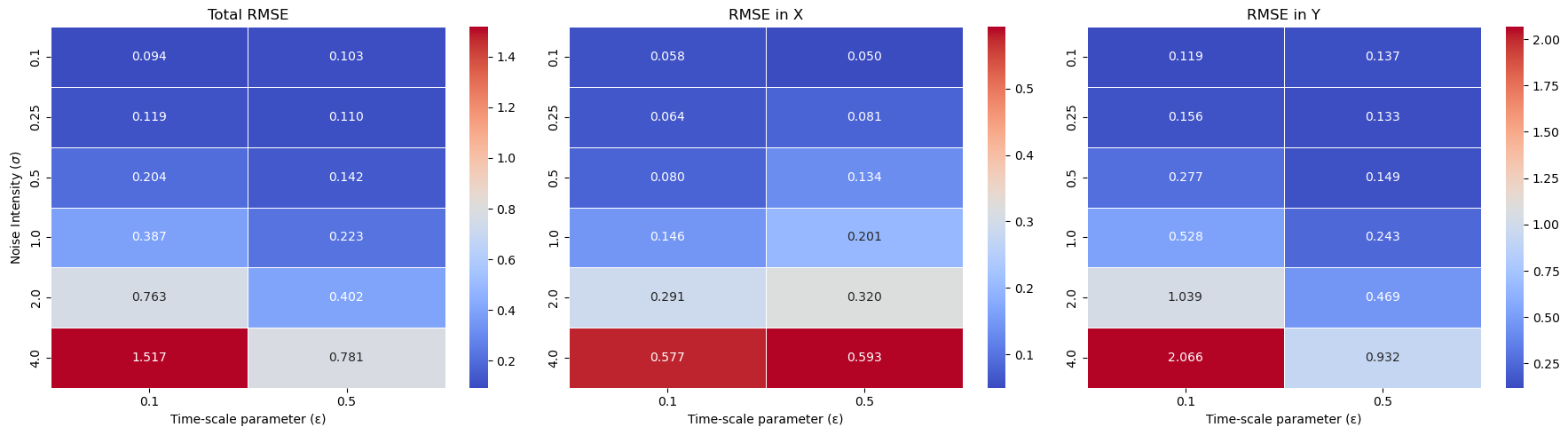}
    \caption{Noise robustness analysis of the stochastic Van der Pol system under varying noise intensity $(\sigma)$ and time-scale parameters $(\varepsilon)$.
    Left: Total RMSE heatmap. Middle \& Right: RMSE in X and Y components, respectively.
    }
    \label{fig:noise_robustness}
\end{figure}
\begin{figure}[htbp]
    \centering
    \includegraphics[width=\textwidth]{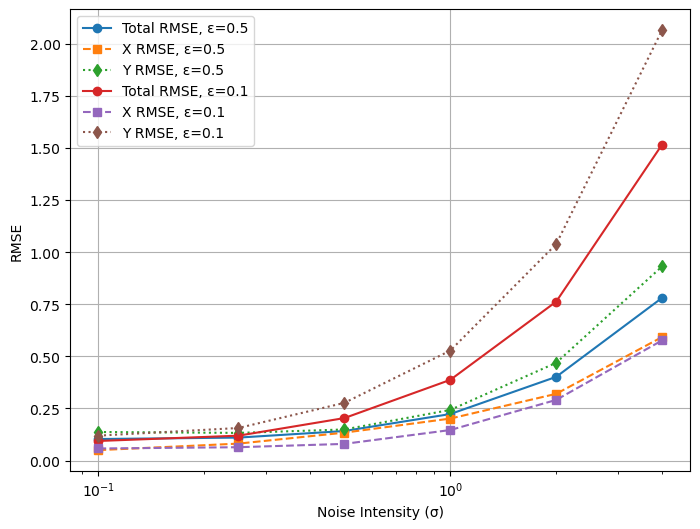}
    \caption{
    Control error sensitivity of the stochastic Van der Pol system under varying noise intensity ( $\sigma$ ) and time-scale separation ( $\varepsilon$ ). Total RMSE (solid lines), X RMSE (dashed lines), and Y RMSE (dotted lines) are plotted for $\varepsilon=0.5$ and $\varepsilon=0.1$ .
    }
    \label{fig:noise_robustness1}
\end{figure}

The robustness of the stochastic NG-RC controller is fundamentally governed by the time-scale parameter $\varepsilon$ and noise intensity $\sigma$. As shown in Fig. \ref{fig:noise_robustness1}, when $\varepsilon=0.1$ (strong time-scale separation), the Y RMSE (dotted lines) surpasses 1.0 at $\sigma=2$, signaling a significant loss in control performance. This is consistent with the high-error region seen in the heatmap (Fig. \ref{fig:noise_robustness}, right panel), which highlights that the fast variable ($y$) is more susceptible to noise-induced deviations. In contrast, the $X$ RMSE (dashed lines) remains stable, reflecting the slower dynamics of the $X$ component, which exhibits linear drift.

\subsubsection{Case 2: Multi-scale SDEs under multiplicative noise}

In the construction of the stochastic NG-RC controller presented in section \ref{sec:model}, we simplified the SDE to a system where the diffusion term is a constant matrix \eqref{sde2}. Instead of explicitly controlling the diffusion term, the noise feature vector are employed to realise this control implicitly. In the stability analysis, we demonstrate that Lassalle's theorem holds under certain conditions for the functions \(f\), \(g\), \(\phi_1\), and \(\phi_2\). The results below show that the stochastic NG-RC controller is also robust to stochastic Van der Pol systems with multiplicative noise.

Consider a simple multiplicative noise model, where system \eqref{cVDPadd} can be rewritten as:
\begin{equation}\label{VDPmul}
\left\{\begin{array}{l}
d x = [-y + u_1] dt + \sigma_1 x dW_t^1, \\
d y = \left[\frac{1}{\varepsilon} \left(y - \frac{1}{3} y^3 + x\right) + u_2 \right] dt + \frac{\sigma_2}{\sqrt{\varepsilon}} y dW_t^2.
\end{array}\right.
\end{equation}

\begin{figure}[htbp]
    \centering
    \includegraphics[width=\textwidth]{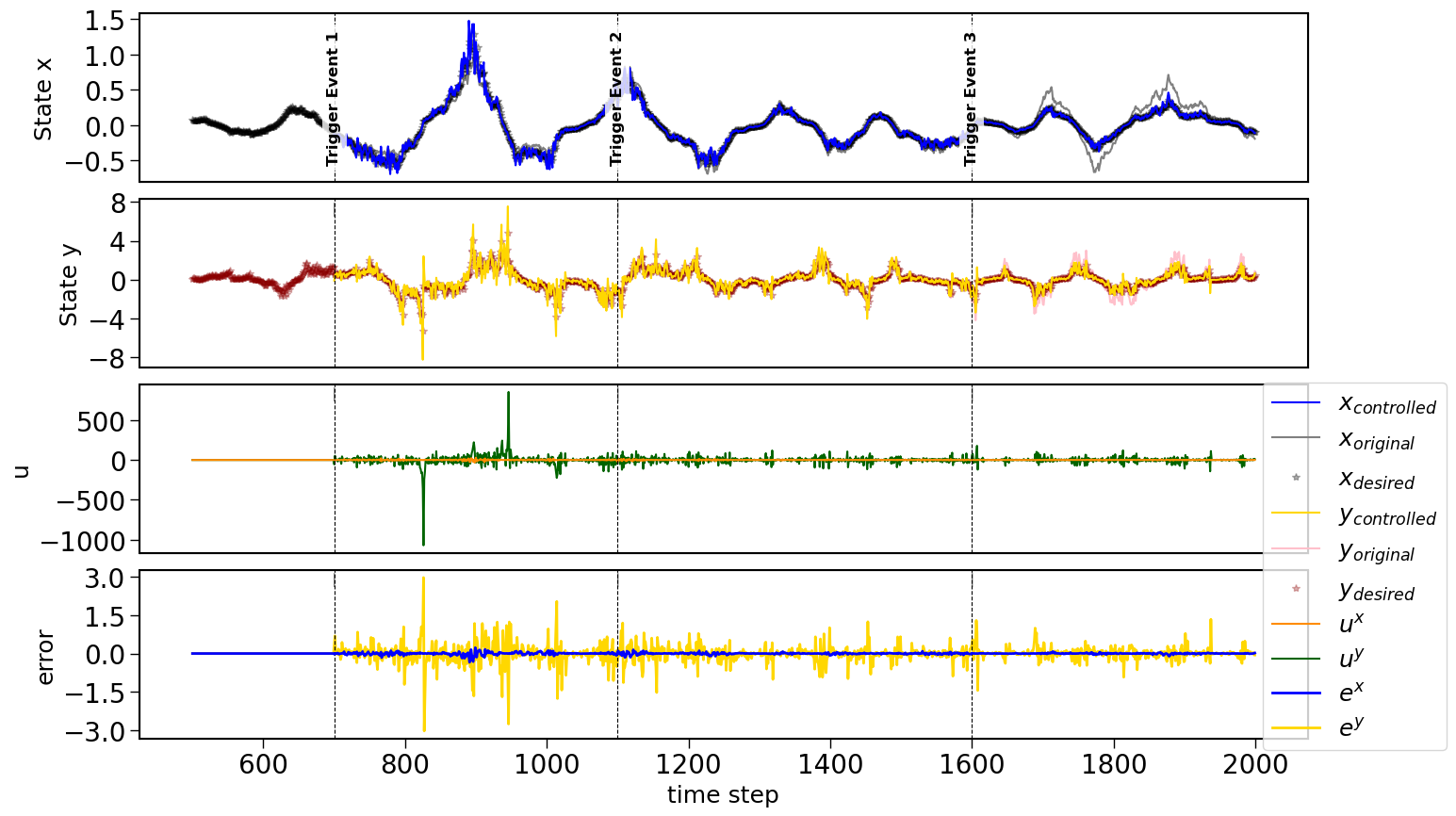}
    \caption{Control results for stochastic NG-RC under multiplicative noise.}
    \label{fig:mul}
\end{figure}

For the system \eqref{VDPmul}, with parameters \(\sigma_1 = 0.8\), \(\sigma_2 = 1\), and \(\varepsilon = 0.1\), the tracking results of the stochastic NG-RC controller are shown in Fig. \ref{fig:mul}. Comparing the first and second row , we observe that the controlled trajectory for the slow variable ($x$) (blue curve) tracks the desired trajectory (black pentagram) closely, while the controlled trajectory for the fast variable ($y$) (gold curve) exhibits significant oscillations near its desired trajectory (red pentagram). 

In the third row, the controller applies a stronger control signal (green curve) to the fast variable ($y$), which is required under the combined influence of multiplicative noise and multiple time scales. This ensures effective control despite the noise. Comparing the control performance in the four regions delineated by the three black dashed lines, we find that the control is relatively poorer in the second segment ($[t_1,t_2] = [700, 1100] $), where the desired trajectory is 1.2 times the amplitude of the original trajectory. This contrasts with the subsequent contraction scenarios (0.8 and 0.5 times, respectively), where the control performance improves (see fourth row: control error at each moment). 

This result is natural because, when the trajectory is expanding outward, the system must overcome its inherent attraction to the original trajectory, requiring additional positive control forces. In this scenario, the controller must counteract both the system's intrinsic dynamics and the noise to keep the trajectory within the target range. However, since the control outputs of the stochastic NG-RC are learned from historical data, tracking an over-expanded desired trajectory can lead to large control input amplitudes, which exacerbate the system's nonlinear effects and make control more challenging. Additionally, the expanded desired trajectory may enter regions of the state space that are less explored by the system, limiting the controller's generalization ability and thus reducing control accuracy.

In contrast, when the trajectory contracts, the desired trajectory stays within the system's domain of attraction, allowing the controller to apply a moderate negative control signal that reinforces the system's natural convergence tendencies. Consequently, the control signal amplitude is smaller, the system's nonlinear effects are less pronounced, and the control remains relatively stable. 

Finally, we record the total RMSE over the entire control interval, which is \(RMSE = 0.2359\).

\subsection{Epilepsy Adaptive Control Based on stochastic NG-RC}

In this section, we apply the stochastic NG-RC controller to real Electroencephalogram (EEG) data. Compared to the simulation experiments on the stochastic Van der Pol system, the study presented here is more challenging in the following aspects:

\begin{enumerate}
    \item \textbf{Complexity of real EEG signals:} In the simulation study, we utilized explicit stochastic differential equations (SDEs) to describe the system evolution. However, for EEG data, no explicit dynamical system equations are available. Consequently, we need to provide an approximate model for the EEG data from the perspective of the neurodynamic system inverse problem.
    
    \item \textbf{dynamical changes in pathological states:} There are significant differences in the neurodynamic mechanisms before and after seizures, manifested through changes in the system's structure and alterations in the effective noise characteristics. Conventional control methods typically assume stable or predictable system dynamics, but the time-varying nature of EEG data necessitates a more adaptive controller. In particular, noise is not merely a small perturbation but a key factor that influences the evolution of the system at different stages of the seizure.
\end{enumerate}

\subsubsection{Governing Law Learning of Epileptic EEG Data}
\label{sec:identify}

We analyzed 23-channel electroencephalography (EEG) recordings from a patient with focal epilepsy, as shown in the \textbf{upper panel} of Fig.~\ref{fig:1dEEG control}. EEG signals capture the electrical activity of the brain during seizures via electrodes placed on the scalp. The dataset comprises both pre-ictal and ictal phases, recorded as a time series matrix with a sampling rate of 256 Hz. Each channel was normalized to the interval $[-0.5,0.5]$ and subsequently downsampled by averaging every 16 points, yielding a dataset with a temporal resolution of $\delta t = 0.0625$. As illustrated in Fig.~\ref{fig:1dEEG control}, a neurologist-annotated transition point at $t=600$ separates pre-seizure and seizure states.

We model the evolution of EEG signals using a one-dimensional stochastic differential equation (SDE)
\begin{equation}\label{eq:sde}
d X_t = f\left(X_t\right) d t + g\left(X_t\right) d W_t, \quad X_0 = x_0,
\end{equation}
where $f(X_t)$ and $g(X_t)$ represent the drift and diffusion terms, respectively, and $W_t$ denotes a standard Brownian motion. We assume that $f(\cdot)$ and $g(\cdot)$ are twice continuously differentiable with uniformly bounded derivatives in $\mathbb{R}$. The Kramers-Moyal expansion provides a direct means of estimating these terms from short-term data, yielding a data-driven SDE representation
\begin{equation}
d x_t=f_x\left(x_t, \theta\right) d t+g_x\left(x_t, \theta\right) d W_t.
\end{equation}
Here, $x_t$ represents the system state, and $\theta$ is the set of coefficients associated with an orthogonal basis expansion (see Table~\ref{tab:1dEEG}). Following a sparse identification of nonlinear dynamics (SINDy) approach, we employ a polynomial basis $\boldsymbol{\Theta}(x) = \{1, x, x^2, x^3\}$ to approximate the drift and diffusion terms:
\begin{equation}\label{eq:km_expansion}
\begin{gathered}
f_x\left(x_t, \theta\right) = \lim _{\Delta t \rightarrow 0} \mathbb{E}\left[\left.\frac{\left(X_{t+\Delta t}-X_t\right)}{\Delta t} \right\rvert\, X_t=x_t\right] = \theta_1 \boldsymbol{\Theta}(x),\\
g_x^2\left(x_t, \theta\right) = \lim _{\Delta t \rightarrow 0} \mathbb{E}\left[\left.\frac{\left(X_{t+\Delta t}-X_t\right)^2}{\Delta t} \right\rvert\, X_t=x_t\right] = \theta_2 \boldsymbol{\Theta}(x).
\end{gathered}
\end{equation}
The coefficients $\theta_1$ and $\theta_2$ are estimated using sparse regression via Lasso optimization. Specifically, given a time series $X = [X_{t_1}, \ldots, X_{t_m}] \in \mathbb{R}^{1 \times m}$, where $m=1500$ represents the temporal window length (Fig.~\ref{fig:1dEEG control}), we minimize the following loss functions:
\begin{equation}
\begin{gathered}
\mathcal{L}_{\text{drift}}(\theta_1) = \frac{1}{(m-1)} \sum_{i=1}^{m-1} \left\| \frac{X_{t_{i+1}}-X_{t_i}}{t_{i+1}-t_i} - f_x\left(x_t, \theta_1\right) \right\|^2 + \alpha \|\theta_1\|_1, \\
\mathcal{L}_{\text{diffusion}}(\theta_2) = \frac{1}{(m-1)} \sum_{i=1}^{m-1} \left\| \frac{(X_{t_{i+1}}-X_{t_i})^2}{t_{i+1}-t_i} - g_x^2\left(x_t, \theta_2\right) \right\|^2 + \beta \|\theta_2\|_1.
\end{gathered}
\end{equation}
Here, $\alpha$ and $\beta$ are sparsity-inducing thresholds: coefficients satisfying $|\theta_j^i| < \alpha$ or $\beta$ during training are set to zero($ i \in\{1,2,3,4\}, j \in\{1,2\}$), ensuring a parsimonious representation. The final identified coefficients are summarized in Table~\ref{tab:1dEEG}.

\begin{table}[htbp]
\centering
\caption{Sparse coefficient estimates for the drift and diffusion terms}
\label{tab:1dEEG}
\begin{tabular}{|c|c|c|}
\hline
\textbf{Basis Function} & \textbf{Drift Term $\theta_1$} & \textbf{Diffusion Term $\theta_2$} \\ \hline
$1$         & 0 & 0       \\ \hline
$x_t$     & -5.8878 & -0.1150     \\ \hline
$x^2$     & 0 & 4.0277       \\ \hline
$x^3$     & 0 & 0.0375      \\ \hline
\end{tabular}
\end{table}

\begin{figure}[htbp]
    \centering
    \includegraphics[width=\textwidth]{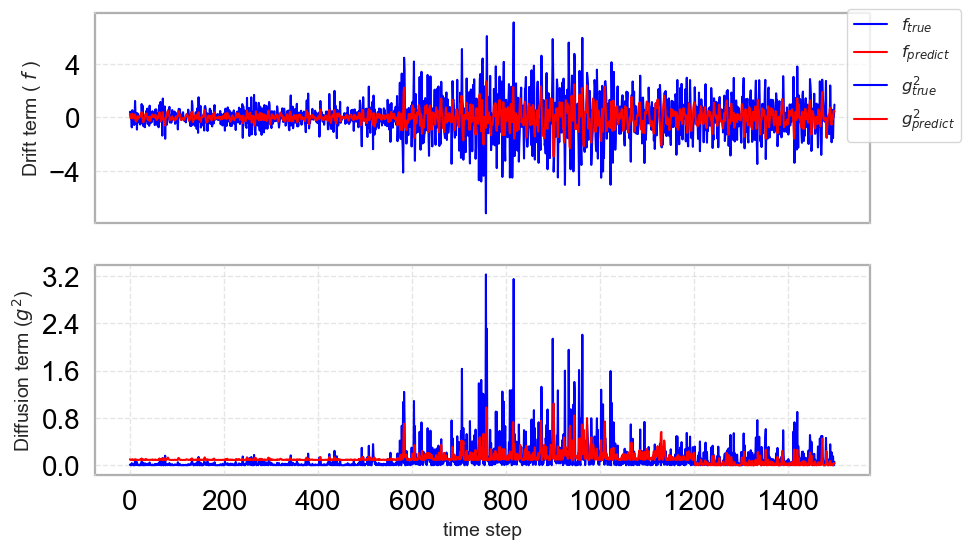}
    \caption{Prediction performance of the Kramers-Moyal network on the governing law of EEG signals.  
    (Top) Prediction of the drift term $f$ by the Kramers-Moyal network: the blue curve ($f_{\text{true}}$) represents the ground truth, while the red curve ($f_{\text{predict}}$) represents the predicted values.  
    (Bottom) Prediction of the squared diffusion term $g^2$: the blue curve ($g_{\text{true}}^2$) represents the ground truth, while the red curve ($g_{\text{predict}}^2$) represents the predicted values.}
    \label{fig:sindy_train}
\end{figure} 

\begin{figure}[htbp]
    \centering
    \includegraphics[width=\textwidth]{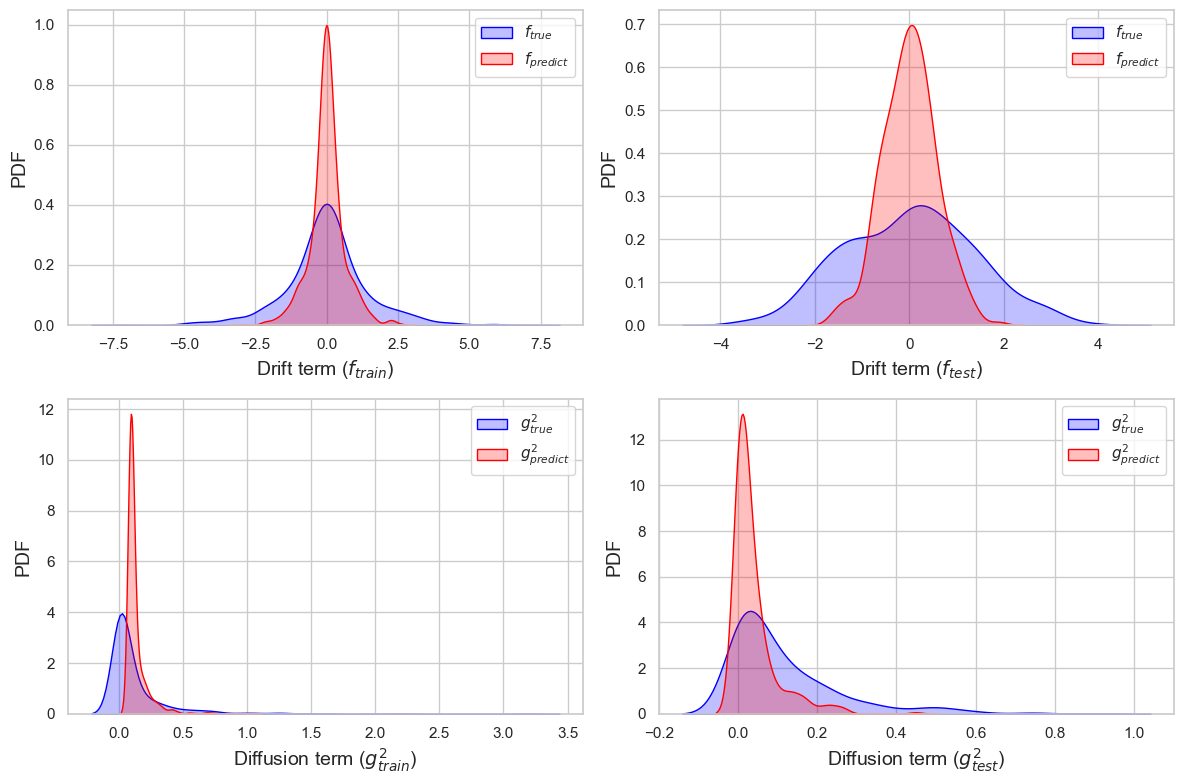}
    \caption{Kernel Density Estimation (KDE) comparison between true and predicted data.  
    The KDE plots show the probability density functions (PDFs) of the true data (blue) and the predicted data (red) for both drift and squared diffusion terms, providing insight into the distributional deviations of the model predictions.}
    \label{fig:KDE}
\end{figure} 

For experimental validation, 1500 drift and diffusion estimates ($f_{\text{true}}, g^2_{\text{true}}$) were computed using Eq.~\eqref{eq:km_expansion}. Fig.~\ref{fig:sindy_train} illustrates the performance of the Kramers-Moyal network in predicting the drift ($f_x$) and squared diffusion ($g^2_x$) terms of the EEG signal's governing law. The first 1200 time steps correspond to training data, while the last 300 steps are test data. The optimal sparsity threshold $\alpha,\beta$ was determined via cross-validation using LassoCV from the \texttt{sklearn} package, yielding $\alpha=0.0012$ and $\beta = 10^{-5}$.

From Fig.~\ref{fig:sindy_train}, we observe that in the low-amplitude regime (first 600 time steps), the model accurately tracks the true dynamics, with the predicted curves nearly overlapping the ground truth. However, in the high-amplitude regime (from time step 600 onward), particularly between 750 and 1000, the squared diffusion term $g_{\text{true}}^2$ exhibits large fluctuations, while the predicted values $g_{\text{predict}}^2$ remain relatively suppressed, failing to capture the extreme variations associated with seizure dynamics. Given the low-dimensional basis expansion, sparsity constraints were not stringent but are expected to play a more significant role in higher-dimensional SDE learning. To further analyze the prediction errors, Fig.~\ref{fig:KDE} presents Kernel Density Estimation (KDE) plots of the probability density functions (PDFs) for the true and predicted data.

By analyzing Figs.~\ref{fig:sindy_train}--\ref{fig:KDE}, we can assess the limitations of the Kramers-Moyal operator network from a dynamical systems perspective. The primary limitations arise from dual sources of model bias:  
(1) The existing model reduces multi-regional coupling to an isolated system, disregarding the topological dynamics of the connectome. The 23-channel EEG data is essentially a stochastic projection of a complex corticothalamic network, and the absence of cross-spectral coupling terms prevents the model from capturing synchrony patterns seen in Neural Mass Models (NMM).  
(2) The stochastic modeling paradigm may be inadequate. Traditional Brownian-motion-driven stochastic differential equations struggle to capture the spike-wave propagation properties of epileptic discharges, conflicting with the statistical properties of extreme events described by $\alpha$-stable Lévy processes. The latter may offer advantages in modeling critical state transitions in cortical reticular systems, where heavy-tailed distributions align with high-frequency oscillations characteristic of seizure events.

Despite these challenges, the proposed method provides a foundation for studying stochastic dynamics in EEG signals. While many existing models filter out noise as observational error, our approach treats noise as an intrinsic component of the system dynamics. The identified governing law serves as the foundation for subsequent control design, providing a generative model for perturbed trajectory and governing the system's response under control, as detailed in the \textbf{bottom panel} of Fig.~\ref{fig:1dEEG control}.

\subsubsection{Seizure Suppression Simulation Using stochastic NG-RC}

To validate the effectiveness of the stochastic NG-RC in real EEG control tasks, we conduct an EEG control experiment following the procedure outlined in Section \ref{sec:experiment}, using the results in Table \ref{tab:1dEEG}. Although EEG signals are significantly more complex than those in the simulated environment, our experimental results demonstrate successful control, suggesting that the stochastic NG-RC possesses generalization capabilities for complex real-world signals. This provides a feasible data-driven approach for EEG modulation.

\begin{figure}[htbp]
    \centering
    \includegraphics[width=\textwidth]{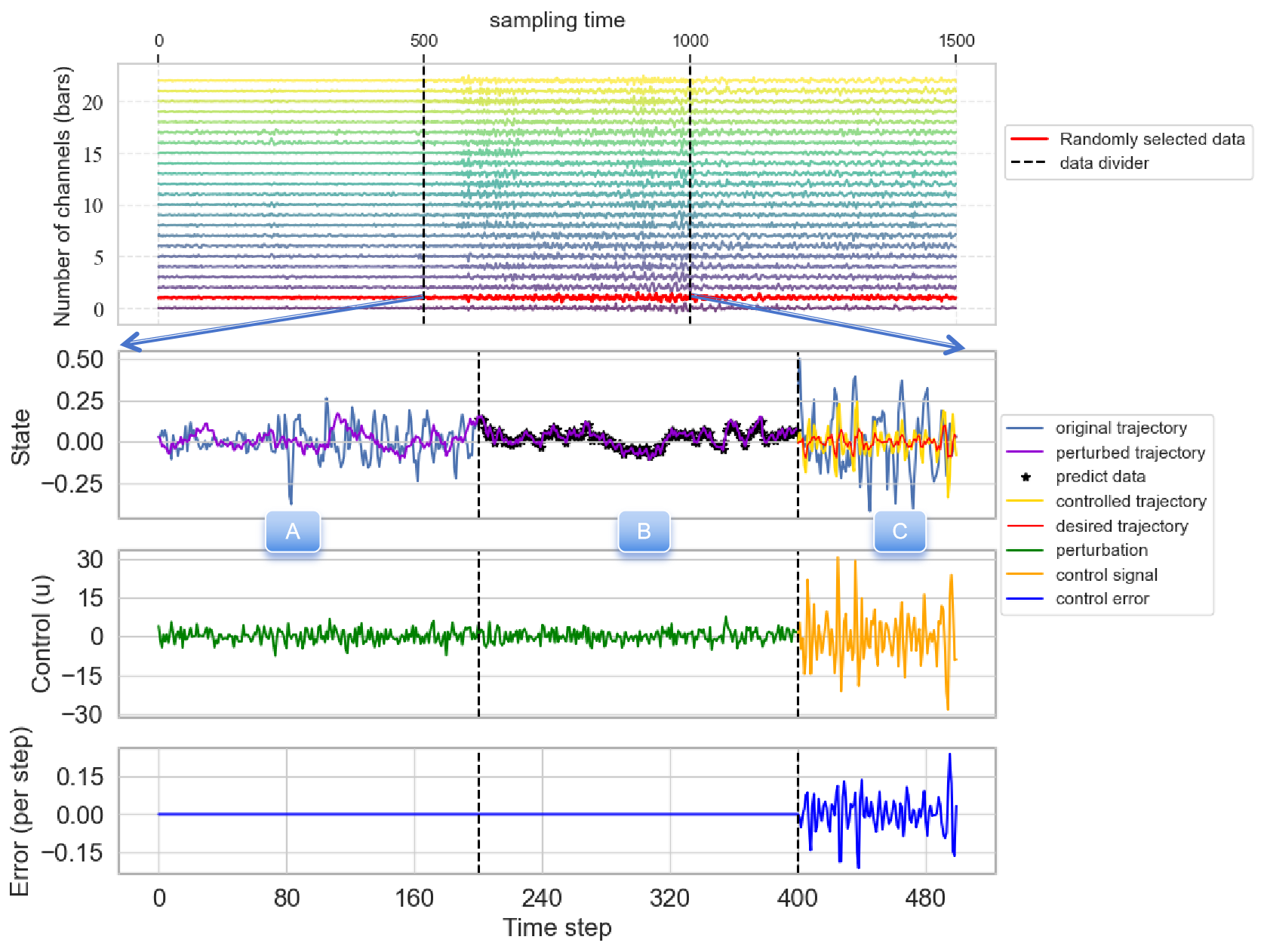}
    \caption{Control of one-dimensional EEG data using the stochastic NG-RC controller. 
    \textbf{Upper panel:} 23-channel EEG data, with one randomly selected time series (marked in red) used for learning a governing law. The sequence is divided into three equal segments by black dashed lines, where the middle 500 samples correspond to seizure data targeted for control. 
    \textbf{Bottom panel:} Three-stage control process for the selected EEG time series. 
    \textbf{Phase A:} Constructing the SDE with SINDy and generating a perturbed trajectory. 
    \textbf{Phase B:} Training the stochastic NG-RC controller with the perturbed data. 
    \textbf{Phase C:} Applying the trained controller to drive seizure-state data toward the resting-state target. 
    The three rows represent the system state $X$, control signal $u$, and control error $e$.}
    \label{fig:1dEEG control}
\end{figure}

\paragraph{Data Generation and Training}



The dataset consists of 1500 EEG samples, where the 501st-1000th points serve as the original trajectory (Figure \ref{fig:1dEEG control}, bottom panel, blue), containing 100 resting-state samples and 400 seizure-state samples. The first 500 points, all in the resting state, define the desired trajectory. Without additional smoothing, we formulate the one-dimensional controlled stochastic differential equation (SDE) as follows
\begin{equation}\label{EEG}
d X_t = [f_{\theta_1}\left(X_t\right)+u_t] d t + g_{\theta_2}\left(X_t\right) d W_t, \quad X_0 = x_0.
\end{equation}
Here, the drift and diffusion terms, $f_{\theta_1}$ and $g_{\theta_2}$, are represented by the basis functions in Table \ref{tab:1dEEG} with their corresponding coefficients. To train the stochastic NG-RC controller, a random perturbation signal $u_t$ is introduced into the numerical iteration of \eqref{EEG}, generating the perturbed trajectory (purple curve in Figure \ref{fig:1dEEG control}).

To train the stochastic NG-RC controller, a random perturbation signal $u_t$ is introduced into the numerical iteration of Eq.~\eqref{EEG}, generating the perturbed trajectory (purple curve in Fig.~\ref{fig:1dEEG control}). The bottom panel (left two segments: Phase A and B) illustrates this process. This phase highlights how the perturbation $u_t$ modifies the system's dynamics, shifting it away from the high-amplitude, irregular oscillations characteristic of seizure states.

In phase A, the perturbed trajectory is then used to train the stochastic NG-RC model. A ridge regression parameter of $\alpha = 0.1931$ is optimized. The next 200 time steps (Phase B) validate the trained NG-RC controller by evaluating its one-step prediction accuracy on unseen perturbed data. The black crosses in Fig.~\ref{fig:1dEEG control} indicate predicted values, which closely align with the perturbed trajectory (purple), with a prediction RMSE of $0.1331$. These results confirm that the stochastic NG-RC model effectively captures the short-term evolution of the controlled SDE and generalizes well to previously unseen perturbations.

\paragraph{Control Results}

In the control stage, the trained stochastic NG-RC controller generates feedback control signals $u_t$ to drive the seizure-state data toward the desired resting-state trajectory. The rightmost segment of the bottom panel (Phase C) in Fig.~\ref{fig:1dEEG control} presents the control outcomes. The red curve represents the desired resting-state trajectory, while the yellow curve shows the controlled trajectory. The blue curve, representing the original seizure trajectory, serves as a reference for comparison.

During the initial phase of control ($t=400$ to $t=410$), the controlled trajectory remains close to the original seizure data. However, around $t=410$, the controlled trajectory begins deviating from the seizure state and moves toward the desired trajectory. Between $t=440$ and $t=480$, the controlled trajectory successfully suppresses large-amplitude oscillations, demonstrating effective stabilization. Nevertheless, near $t=490$, the control performance deteriorates, with the trajectory exhibiting increased oscillations and divergence from the target state. The overall control performance is quantified by a long-time control RMSE of $0.0752$ over the 100-step control interval.

The bottom row of Fig.~\ref{fig:1dEEG control} presents the control error, highlighting the discrepancy between the controlled and desired trajectories. The increase in error towards the end of the control interval suggests potential limitations in the learned controller, warranting further refinement of the control strategy to enhance long-term stability. Several factors may contribute to the observed control errors:

\begin{enumerate}
    \item \textbf{Governing law learning error}: Given the complexity of EEG signals, the system \eqref{EEG} serves only as an approximation of the underlying true dynamics, introducing inherent model uncertainty during the governing law learning phase. Consequently, the controller may fail to predict the exact dynamic response when executing the control task. As discussed in section \ref{sec:identify}, potential improvements include incorporating more accurate neuro-dynamic models and employing non-Gaussian noise modeling techniques to enhance system representation.

    \item \textbf{Impact of perturbation signals}: The controller's learning process relies on the generated perturbed trajectory (e.g., the purple traces in Fig.~\ref{fig:1dEEG control}). However, the perturbation signal $u_t$ is randomly generated, affecting both the dynamics embedded in the perturbed data and the learning outcome of the controller. Experimental results indicate that different perturbations $u_t$ may lead the controller to learn distinct control strategies, significantly influencing the final control performance. This underscores the necessity of designing a more structured perturbation signal $u_t$ to optimize control effectiveness.

    \item \textbf{Effect of stochastic noise}: In simulating the discretized system \eqref{EEG}, the Euler-Maruyama method is employed, incorporating a stochastic noise term $\sqrt{\Delta t} \xi_t$, where $\xi_t \sim \mathcal{N}(0, I_N)$ represents an independent and identically distributed standard Gaussian white noise. Due to this random perturbation, the probability of the perturbed data aligning precisely with the original trajectory is exceedingly low. Consequently, the controller may fail to fully capture the dynamical structure of the original system during training. This implies that both the desired trajectory $x_{\text{desired}}$ and the original trajectory $x_{\text{original}}$ remain unknown to the stochastic NG-RC controller. Addressing the uncertainty induced by stochastic noise presents a critical direction for future optimization of control performance.

    \item \textbf{Accumulation of long-term control errors}: In the control phase (Phase C in the bottom panel of Fig.~\ref{fig:1dEEG control}), starting from $t=400$, the evolution of the controlled trajectory depends solely on its past states and the control input $u_t$ generated by the stochastic NG-RC. As a result, errors accumulate over time, potentially leading to error amplification, deviation from the desired trajectory, or even unstable oscillations in long-term iterations.
\end{enumerate}

\begin{figure}[htbp]
    \centering
    \includegraphics[scale = 0.5]{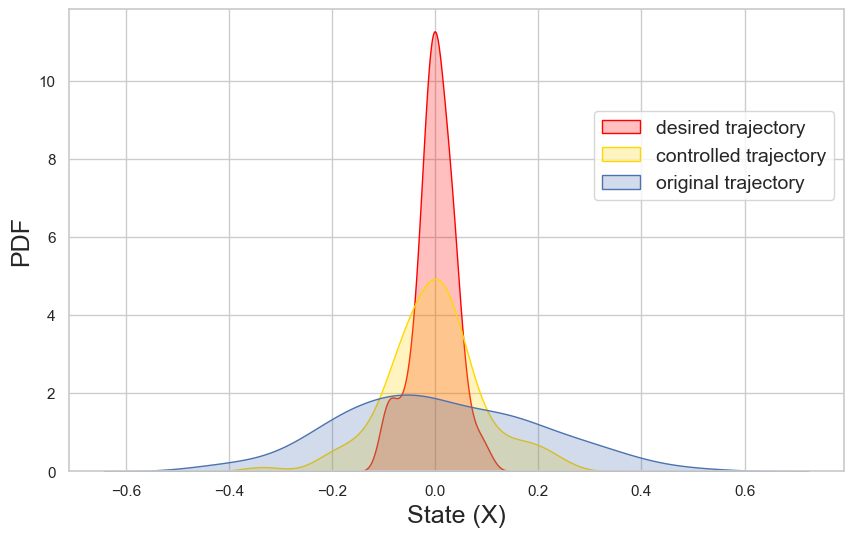}
    \caption{Kernel density estimation (KDE) comparison for long-term control under the stochastic NG-RC framework. The red, yellow, and blue regions correspond to the probability density functions of the desired trajectory, the controlled trajectory, and the original trajectory, respectively.}
    \label{fig:1dEEG control1}
\end{figure}

To further analyze control errors, we not only plotted the control error over time (last row of Fig.~\ref{fig:1dEEG control}), but also assessed the control performance from a probabilistic perspective. Fig.~\ref{fig:1dEEG control1} presents the kernel density estimation (KDE) of different trajectory distributions. The seizure-state distribution (blue) exhibits strong oscillatory characteristics, while the resting-state trajectory (red) shows a smaller amplitude, primarily concentrated in the range $[-1.5,1.5]$. Under the stochastic NG-RC controller, the controlled trajectory (yellow) exhibits reduced amplitude compared to the seizure state and trends towards the resting trajectory.

While preliminary experiments demonstrate that the proposed control strategy can mitigate seizure dynamics to some extent, further optimization is necessary for clinical applications. In particular, the amplitude of the control input must be strictly regulated to ensure safety in neuro-modulation interventions. This necessitates in-depth research in neuroscience, including strategies for designing control signals within physiologically safe limits to minimize potential risks to patients. Future studies should integrate interdisciplinary knowledge from neuroscience, biomedical engineering, and control theory to develop more robust and clinically viable control strategies.

\section{Main Findings and Conclusions}
\label{sec:conclusion}

In this study, we propose a next-generation reservoir computing (NG-RC)-based controller, referred to as stochastic NG-RC. This controller enables precise closed-loop control of complex stochastic dynamical systems. Furthermore, we analyze the controllability conditions of the controlled stochastic differential equations (SDEs) and establish control error convergence criteria using the stochastic LaSalle theorem, providing a theoretical foundation for closed-loop control of stochastic dynamical systems.

To evaluate the effectiveness and robustness of the stochastic NG-RC controller, we conduct both simulation experiments and real-data experiments.

\subsection{Robust Control of Stochastic Systems with Multiple Time Scales}
\begin{itemize}
    \item In the case of low-intensity noise without multiple time scales, we validate the rapid and precise control capability of stochastic NG-RC on a stochastic Van der Pol oscillator. The controlled system stabilizes within the desired trajectory in just 1--2 time steps.
    \item Compared to the method proposed by Kent et al.~(2024)\cite{kent2024controlling}, conventional NG-RC exhibits control failure under noise intensities $\sigma_1 = 1, \sigma_2 = 2$ and time scales $\epsilon = 0.5$. In contrast, stochastic NG-RC maintains control stability across the entire control interval, achieving a significantly lower joint RMSE of 0.3632.
    \item We further explore the robustness of stochastic NG-RC under various noise types (additive, multiplicative), noise intensities, and different time scales, demonstrating its broad applicability.
\end{itemize}

\subsection{Clinical Validation of Seizure Suppression}
\begin{itemize}
    \item The underlying stochastic dynamical system of epileptic seizures is reconstructed from EEG data using Kramers-Moyal sparse networks.
    \item A trained stochastic NG-RC controller generates closed-loop feedback signals to regulate high-amplitude seizure oscillations toward the resting state. Experimental results indicate effective seizure suppression, though further optimization is necessary to enhance real-time performance and clinical adaptability.
\end{itemize}

In summary, the data-driven control framework based on stochastic NG-RC offers a novel approach to orbit control of stochastic dynamical systems with multiple time scales. The method not only enables rapid and accurate trajectory tracking but also significantly reduces sample complexity compared to traditional artificial neural networks. Moreover, it inherits the advantages of reservoir computing, including structural clarity and computational efficiency.

\section{Future Directions}
\label{sec:future_work}

While this study successfully introduces a closed-loop feedback controller, several key challenges remain. Interpretability remains a critical issue for data-driven models lacking first-principles guidance. In particular, few studies have examined the controllability and control convergence of NG-RC from a stochastic dynamical systems perspective. Although we provide conditions for the controllability of controlled SDEs and establish control error convergence requirements, theoretical guidance is still lacking for selecting basis functions in the sparse networks, as well as for determining appropriate feature vectors in the stochastic NG-RC framework.

Future work will focus on two main directions:

\subsection{Enhancing Control Performance on Real Data}

In the second real-data experiment, the controller's performance in seizure suppression was not as effective as in the simulation experiments. To address this, we propose the following improvements:

\begin{enumerate}
    \item \textbf{Optimizing stochastic dynamical system modeling}. The current Kramers-Moyal network exhibits limited fitting accuracy, leading to potential model uncertainties and significant errors. Future work could integrate brain connectome information to construct a more comprehensive neuro-dynamic model, reducing reliance on treating all external inputs as noise and thereby improving modeling accuracy.
    \item \textbf{Expanding beyond Gaussian noise}. The present approach assumes Gaussian noise, yet seizure-related high-frequency oscillations may be better characterized by more complex stochastic processes such as $\alpha$-stable L\'evy processes. Future research should explore alternative noise modeling strategies to improve realism.
    \item \textbf{Analyzing and optimizing control errors}. Further investigation is needed to identify sources of control errors.
    Specifically speaking, the influence of the perturbation signal $u_t$ on control performance should be examined, with optimized perturbation designs to enhance controller stability and generalizability. Additionally, methods to mitigate trajectory deviations induced by the stochastic term $\xi_t$ should be explored to improve target trajectory tracking.
\end{enumerate}

\subsection{Extending Control to Frequency Modulation in Stochastic Oscillatory Systems}

In stochastic oscillatory systems, amplitude and frequency regulation are often coupled, meaning that modifying one parameter may influence the other. This study primarily focuses on amplitude control through a data-driven closed-loop feedback approach, without addressing frequency regulation. Future work should explore simultaneous amplitude and frequency control under the data-driven framework, aiming for a more comprehensive characterization of stochastic oscillatory dynamics. This may involve:

\begin{itemize}
    \item Introducing time-frequency analysis techniques to capture oscillatory behavior more accurately.
    \item Designing an adaptive frequency control strategy to extend the existing model, thereby enhancing robustness and accuracy in joint amplitude-frequency modulation.
\end{itemize}

In conclusion, future research will focus on improving the accuracy of stochastic NG-RC control in real-data applications and extending its capability to jointly regulate amplitude and frequency in stochastic oscillatory systems.

\section*{Acknowledgements}
Thank those who have contributed to this research.

\appendix
\section{Supplementary material}
\subsection{Stability of the Controlled SDE}
\label{Supplementary material:stability}

In the more general case, we assume that the source systems satisfy  nonautonomous $n$-dimensional stochastic differential equations 
$$
d x(t)=f(x(t), t) d t+g(x(t), t) d W(t).
$$
According to the system (\ref{sde1}),we split the control $u_t$ into a deterministic part $u_1$ and a stochastic part $u_2$. Then we have the following controlled system
\begin{equation}\label{nonautosde}
d x(t)=[f(x(t), t)+u_1]d t+[g(x(t), t)+u_2] d W(t).
\end{equation}
Given the target trajectory (or desired output) $x_{des}$. Since the real time series data $x_{des}$ will also contain both deterministic and stochastic parts, we allow the feedback control signal $u_t=(u_1,u_2)$ to counteract the effects of the deterministic and stochastic parts of $x_{des}$ on the error dynamics system, respectively.
That is, in designing the data-driven model, let $u_1 = \phi_1(x(t),t)+f_1(x_{des}), u_2 = \phi_2(x(t),t)+g_1(x_{des})$, where $f_1,g_1$ denote the deterministic and stochastic parts of $x_{des}$, respectively.
Abstractly, the external input $u_t$ is denoted as $u_t=\phi_u(x(t), t)+d{x}_{des}$ . This step is designed to be significantly different from the conventional NG-RC control model.

The dynamical system of control error $e_t = x(t)-x_{des}$ is as follows
\begin{equation}\label{e_dot}
\begin{aligned}
de_t = [f(e_t+x_{des},t) + \phi_1(e_t+x_{des},t)]dt + [g(e_t+x_{des},t)+\phi_2(e_t+x_{des},t)] d W(t).
\end{aligned}
\end{equation}
For the sake of notational aesthetics, we write $(e_t+x_{des},t)$ as $(e(t)),t)$ below.

Inspired by the work of Mao (1998)\cite{Mao1999StochasticVO}, in the following we give the conditions that are extremely necessary for the LaSalle Theorem of controlled stochastic differential equation.

(H1) Set $f,\phi_1: R^n \times R_{+} \rightarrow R^n$ and $g,\phi_2: R^n \times R_{+} \rightarrow R^{n \times m}$ are measurable functions. They satisfy the local Lipschitz condition and the linear growth condition. That is, for each $k=1,2, \ldots$, there is a $c_k>0$ such that

$$
|f(e_1, t)-f(e_2, t)| \vee|\phi_1(e_1, t)-\phi_1(e_2, t)|  \vee|g(e_1, t)-g(e_2, t)| \vee|\phi_2(e_1, t)-\phi_2(e_2, t)| \leqslant c_k|e_1-e_2|,
$$
for all $t \geqslant 0$ and those $e_1, e_2\in R^n$ with $|e_1| \vee|e_2| \leqslant k$. There is moreover a $c>0$ such that
$$
|f(e, t)| \vee|\phi_1(e, t)| \vee|g(e, t)| \vee|\phi_2(e, t)|\leqslant c(1+|e|),
$$
for all $(e, t) \in R^n \times R_{+}$.

We denote by $L^1\left(R_{+} ; R_{+}\right)$the family of all functions $\gamma: R_{+} \rightarrow R_{+}$such that $\int_0^{\infty} \gamma(t) d t<\infty$ .
Let $C^{2,1}\left(R^n \times R_{+} ; R_{+}\right)$denote the family of all nonnegative functions $V(e, t)$ on $R^n \times R_{+}$which are continuously twice differentiable in $x$ and once differentiable in $t$. Define an operator $L$ acting on $C^{2,1}\left(R^n \times R_{+} ; R_{+}\right)$ functions by
$$
\begin{aligned}
LV(e, t) &= V_t(e, t) + V_e(e, t) \left[ f(e, t) + \phi_1(e, t) \right] \\
&\quad + \frac{1}{2} \operatorname{trace}\left\{ \left[ g(e, t) + \phi_2(e, t) \right]^T V_{ee}(e, t) \left[ g(e, t) + \phi_2(e, t) \right] \right\}
\end{aligned}
$$
where
$$
\begin{gathered}
V_t(e, t)=\frac{\partial V(e, t)}{\partial t}, \quad V_e(e, t)=\left(\frac{\partial V(e, t)}{\partial x_1}, \ldots, \frac{\partial V(e, t)}{\partial x_n}\right), \\
V_{e e}(e, t)=\left(\frac{\partial^2 V(e, t)}{\partial e_i \partial e_j}\right)_{n \times n} .
\end{gathered}
$$

\begin{proposition}\label{pro1}[Stability of the Controlled SDE] 
Let (H1) hold. Assume that there is a function $V \in$ $C^{2,1}\left(R^n \times R_{+} ; R_{+}\right)$, a function $\gamma \in L^1\left(R_{+} ; R_{+}\right)$and a continuous function $w: R^n \rightarrow R_{+}$such that 
\begin{equation}\label{con1}
\lim _{|x| \rightarrow \infty} \inf _{0 \leqslant t<\infty} V(e, t)=\infty
\end{equation}
and
\begin{equation}\label{con2}
L V(e, t) \leqslant \gamma(t)-w(e), \quad(e, t) \in R^n \times R_{+} .
\end{equation}
Moreover, for all initial values $e_0 \in R^n$ there exists a constant $p>2$ such that the P-order moments of the solution of the system \eqref{e_dot} starting from the initial value $e_0$ exist
\begin{equation}\label{con3}
\sup _{0 \leqslant t<\infty} E\left|e\left(t ; e_0\right)\right|^p<\infty
\end{equation}
Then, for every $e_0 \in R^n$,
\begin{equation}\label{clu1}
\lim _{t \rightarrow \infty} V\left(e\left(t ; e_0\right), t\right) \text { exists and is finite almost surely }
\end{equation}
and, moreover,
\begin{equation}\label{clu2}
\lim _{t \rightarrow \infty} w\left(e\left(t ; e_0\right)\right)=0 \quad \text { a.s. }
\end{equation}
\end{proposition}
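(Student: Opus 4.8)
The plan is to convert the pointwise differential inequality \eqref{con2} into an almost-sure convergence statement through It\^o's formula combined with a nonnegative semimartingale convergence theorem, and then to upgrade the resulting integrability of $w(e(\cdot))$ into its pointwise decay by means of the moment bound \eqref{con3}. First I would use (H1) to guarantee a unique global solution $e(t;e_0)$ of \eqref{e_dot}, so that $V(e(t),t)$ is well defined, and apply It\^o's formula to obtain
\[
V(e(t),t) = V(e_0,0) + \int_0^t LV(e(s),s)\,ds + M(t), \qquad M(t) := \int_0^t V_e(e(s),s)\,[g+\phi_2](e(s),s)\,dW(s),
\]
where $M$ is a continuous local martingale with $M(0)=0$. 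Using \eqref{con2} I would split the finite-variation part by writing $LV = \gamma - (\gamma - LV)$ with $\gamma - LV \ge w \ge 0$; setting $X(t) := V(e(t),t) + \int_0^t w(e(s))\,ds \ge 0$ this yields
\[
X(t) = V(e_0,0) + A_1(t) - A_2(t) + M(t), \quad A_1(t) := \int_0^t \gamma(s)\,ds, \quad A_2(t) := \int_0^t \bigl[\gamma(s) - LV(e(s),s) - w(e(s))\bigr]ds,
\]
where $A_1,A_2$ are continuous, nondecreasing, vanish at $0$, and $A_1(\infty)\le\int_0^\infty\gamma(s)\,ds<\infty$ because $\gamma\in L^1(\mathbb{R}_+;\mathbb{R}_+)$.

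Next I would invoke the nonnegative semimartingale convergence theorem in the form used by Mao \cite{Mao1999StochasticVO}: for a nonnegative process $X = X(0) + A_1 - A_2 + M$ with $A_1,A_2$ nondecreasing and $M$ a continuous local martingale, the event $\{A_1(\infty)<\infty\}$ is contained, up to a null set, in the event on which $\lim_{t\to\infty}X(t)$ exists and is finite, $A_2(\infty)<\infty$, and $M(t)$ converges. Since $A_1(\infty)<\infty$ almost surely, I obtain that $\lim_{t\to\infty}X(t)$ exists and is finite a.s. Because $w\ge 0$, the increasing integral $\int_0^t w(e(s))\,ds$ is dominated by $X(t)$, which is bounded along the a.s.-convergent trajectory, so $\int_0^\infty w(e(s))\,ds<\infty$ a.s.; subtracting this convergent term from $X(t)$ then shows $V(e(t),t)$ converges a.s. to a finite limit, which is exactly \eqref{clu1}. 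Finiteness of $\int_0^\infty w(e(s))\,ds$ immediately forces $\liminf_{t\to\infty} w(e(t)) = 0$ a.s.

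The remaining and hardest step is to promote $\liminf_{t\to\infty}w(e(t))=0$ to the full limit \eqref{clu2}, a stochastic Barbalat-type argument, and it is precisely here that \eqref{con3} with $p>2$ is indispensable. The idea is that finite integral mass forbids $w(e(\cdot))$ from oscillating between a small level $\epsilon$ and a large level $2\epsilon$ infinitely often, provided each oscillation consumes a definite amount of time and integral mass. To make this rigorous I would first use $\sup_{t}E|e(t)|^p<\infty$ to obtain tightness, confining the trajectory to a compact set on which the continuous $w$ is uniformly continuous outside an event of small probability, and then use the linear-growth bounds in (H1) together with the Burkholder--Davis--Gundy inequality to establish a uniform modulus estimate of the form $E\bigl[\sup_{t\le s\le t+h}|e(s)-e(t)|^2\bigr]\le C\,h$. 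Assuming for contradiction that $w(e(t))\ge 2\epsilon$ along a sequence $t_k\to\infty$ with positive probability, this estimate guarantees that on an interval of fixed length after each $t_k$ the value $w(e(\cdot))$ stays above $\epsilon$ with probability bounded below, so infinitely many such crossings would make $\int_0^\infty w(e(s))\,ds=\infty$ with positive probability, contradicting the integrability already established.

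A Borel--Cantelli bookkeeping of these crossings then yields $\lim_{t\to\infty}w(e(t))=0$ a.s., completing \eqref{clu2}. I expect this last passage---turning the $L^1$-in-time bound into an almost-sure pointwise limit via the $p$-th moment control---to be the main obstacle, since it is the only place where path continuity, the growth conditions, and the uniform moment estimate must be combined quantitatively; the earlier steps are essentially a deterministic-style Lyapunov argument dressed in the semimartingale convergence theorem.
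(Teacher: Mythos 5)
Your first half is correct and coincides with the paper's argument: It\^o's formula plus \eqref{con2}, followed by the nonnegative semimartingale convergence theorem (the paper's Lemma \ref{fact2}, from Liptser--Shiryaev), yields both \eqref{clu1} and $\int_0^\infty w(e(s))\,ds<\infty$ a.s.; your decomposition $X=V+\int_0^\cdot w$ is an equivalent repackaging of the paper's direct application with $X=V$. The gap is in your passage from integrability to \eqref{clu2}. Your plan to use \eqref{con3} for ``tightness, confining the trajectory to a compact set outside an event of small probability'' does not work: $\sup_t E|e(t)|^p<\infty$ controls only the one-dimensional marginals, i.e. $\sup_t P(|e(t)|>R)\leqslant KR^{-p}$, and a union over uncountably many times of individually small events need not be small, so it gives no bound on $P(\sup_{t\geqslant 0}|e(t)|>R)$. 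The almost-sure boundedness of the whole sample path --- which you genuinely need so that $w$ is uniformly continuous on the region where the path lives --- comes instead from \eqref{clu1} \emph{combined with the radial unboundedness condition} \eqref{con1}: a.s.\ $\sup_t V(e(t,\omega),t)<\infty$, hence $\sup_t|e(t,\omega)|\leqslant h(\omega)<\infty$. Tellingly, your proof never invokes \eqref{con1} at all, and that hypothesis is exactly the missing ingredient.

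The second weak point is your oscillation argument. The crossing times $t_k$ are random and the crossing events are strongly dependent, so ``probability bounded below'' for each $k$ plus plain Borel--Cantelli concludes nothing; you would need a conditional (L\'evy-type) Borel--Cantelli at stopping times together with a modulus estimate conditional on $\mathscr{F}_{t_k}$, whose constant involves $|e(t_k)|$ and is therefore not uniform. The paper sidesteps all of this by making the entire second step pathwise and deterministic: under (H1) and \eqref{con3}, the moment inequality for stochastic integrals gives $E|y(t)-y(s)|^p\leqslant C|t-s|^{1+(p-2)/2}$ for $y=\int_0^\cdot[g+\phi_2]\,dW$, and the Kolmogorov--Centsov theorem (Theorem \ref{fact1}) then makes almost every path of $y$ uniformly continuous --- this is the actual place where $p>2$ is indispensable, since the H\"older exponent $1+(p-2)/2$ must exceed $1$. (Your BDG estimate $E\bigl[\sup_{t\leqslant s\leqslant t+h}|e(s)-e(t)|^2\bigr]\leqslant Ch$ uses only second moments, so your claim that $p>2$ is essential to your route is unsubstantiated.) With the drift integral Lipschitz on the a.s.-bounded path, $e(\cdot,\omega)$ is uniformly continuous, $w$ is uniformly continuous on the compact ball $\bar S_{h(\omega)}$, and a per-sample-path Barbalat contradiction with $\int_0^\infty w(e(t,\omega))\,dt<\infty$ finishes the proof. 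To repair your proposal, replace the tightness step by the \eqref{clu1}--\eqref{con1} boundedness argument and either carry out the conditional Borel--Cantelli bookkeeping in full or adopt the paper's pathwise continuity route.
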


The condition \eqref{con1} is that the function $V(e, t)$ is radially unbounded. The conclusion \eqref{clu2} can be understood as the set $D_w= \left\{e \in R^n: w(e)=0 \right\}$ is the invariant set of the system \eqref{e_dot}.
For any initial value $e_0 \in R^n$  and sample point $\omega \in \Omega$, the conclusion \eqref{clu1}-\eqref{clu2} hold.
By \eqref{con1}, $\left\{e\left(t, \omega ; e_0\right): t \geqslant 0\right\}$ must be bounded so there is a subsequence $\left\{e\left(t_k, \omega ; e_0\right): k \geqslant 1\right\}$ which converges to some $\hat{e} \in R^n$. Since $w$ is continuous, we have $w(\hat{e})=0$, i.e., $\hat{e} \in D_w$ whence $D_w \neq \varnothing$. Let $d\left(e, D_w\right)$ denote the distance between $e$ and set $D_w$, that is $d\left(e, D_w\right)=\min _{\bar{e} \in D_w}|e-\bar{e}|$. Then \eqref{clu2} means
$$
\lim _{t \rightarrow \infty} d\left(x\left(t ; x_0\right), D_w\right)=0 \quad \text { a.s. }
$$
In other words, all the solutions of Eq.\eqref{e_dot} will asymptotically approach $D_w$ with probability one. 
Before proving the proposition \ref{pro1}, we give the following facts and verify a lemma.

\begin{theorem}[Kolmogorov-Centsov theorem\cite{karatzas1991brownian}]\label{fact1}

Suppose that the n-dimensional stochastic process $X(t)$ on $t \geqslant 0$ satisfies the following conditions
$$
E|X(t)-X(s)|^\alpha \leqslant C|t-s|^{1+\beta}, \quad 0 \leqslant s, t<\infty,
$$
holds for some positive constants $\alpha, \beta$ and $C$. Then there exists a continuous modification $\widetilde{X}(t)$ to $X(t)$ which has the property that for the exponent $\gamma \in(0, \beta / \alpha)$, there exists a positive random variable $h(\omega)$ such that
$$
P\left\{\omega: \sup _{\substack{0<t-s<h(\omega) \\ 0 \leqslant s, t<\infty}} \frac{|\tilde{X}(t, \omega)-\tilde{X}(s, \omega)|}{|t-s|^\gamma} \leqslant \frac{2}{1-2^{-\gamma}}\right\}=1 .
$$
In other words, almost every sample path of $\widetilde{X}(t)$ is locally but uniformly Hölder-continuous with index $\gamma$.

\end{theorem}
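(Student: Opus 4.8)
The plan is to prove the statement by the classical dyadic-discretization and chaining method: first control the increments of $X$ along a dyadic grid, then bootstrap these scale-by-scale estimates into a single uniform Hölder bound on a dense set, and finally extend to a continuous modification. Fix an exponent $\gamma \in (0, \beta/\alpha)$ and work first on a bounded interval such as $[0,1]$, deferring the passage to all $t \geq 0$ to the end. The first step is to turn the moment hypothesis into a tail bound via Markov's inequality applied to $|X(t)-X(s)|^\alpha$: for any $\varepsilon > 0$,
\[
P(|X(t) - X(s)| \geq \varepsilon) \leq \varepsilon^{-\alpha}\, E|X(t) - X(s)|^\alpha \leq C\, \varepsilon^{-\alpha} |t-s|^{1+\beta}.
\]
Evaluating this at consecutive dyadic points $s = (k-1)2^{-n}$, $t = k2^{-n}$ with the scale-adapted threshold $\varepsilon = 2^{-\gamma n}$ produces a bound of order $2^{-n(1 + \beta - \gamma\alpha)}$, and since $\delta := \beta - \gamma\alpha > 0$ the exponent strictly exceeds $1$.

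The second step combines a union bound with the Borel--Cantelli lemma. Summing over the $2^n$ adjacent dyadic pairs at level $n$ gives
\[
P\!\left( \max_{1 \leq k \leq 2^n} |X(k2^{-n}) - X((k-1)2^{-n})| \geq 2^{-\gamma n} \right) \leq C\, 2^{-n\delta},
\]
which is summable in $n$; hence almost surely there is a random level $N(\omega)$ beyond which every adjacent dyadic increment at scale $n$ is bounded by $2^{-\gamma n}$. The third step is the \emph{chaining} argument that yields the explicit constant. For any two dyadic rationals $s < t$ with $0 < t-s < 2^{-N(\omega)}$, I would represent the passage from $s$ to $t$ as a telescoping sum of adjacent dyadic increments at successively finer scales, bounding each term by the scale estimate just obtained and summing the resulting geometric series $\sum 2^{-\gamma n}$. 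Choosing $h(\omega) = 2^{-N(\omega)}$, this produces exactly
\[
|X(t) - X(s)| \leq \frac{2}{1 - 2^{-\gamma}}\, |t-s|^\gamma
\]
for all dyadic $s,t$ with $0 < t-s < h(\omega)$, almost surely.

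Finally, this Hölder estimate shows that $X$ is uniformly continuous along the dyadic rationals almost surely, so it extends to a continuous process $\widetilde{X}$ defined by $\widetilde{X}(t) = \lim_{q \to t} X(q)$ over dyadic $q$; by continuity $\widetilde{X}$ inherits the same bound with the stated constant, delivering the random variable $h(\omega)$ in the claim. It remains to check that $\widetilde{X}$ is a \emph{modification}, that is $P(\widetilde{X}(t) = X(t)) = 1$ for each fixed $t$: the moment bound forces stochastic continuity ($X(q) \to X(t)$ in probability as $q \to t$), while $X(q) \to \widetilde{X}(t)$ almost surely along dyadics, so the two limits agree almost surely. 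The main obstacle I anticipate is the bookkeeping in the chaining step --- pinning down the intermediate dyadic points and carefully bounding the two end pieces that connect $s$ and $t$ to the coarsest enclosing grid, which is what recovers the sharp constant $2/(1-2^{-\gamma})$ --- together with the routine but necessary patching of the estimates across the countably many unit intervals required to upgrade from $[0,1]$ to all $t \geq 0$.
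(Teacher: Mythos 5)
Your dyadic argument is, in substance, the proof in the paper's own source: the paper never proves this theorem but quotes it (with a citation to Karatzas and Shreve, where it is Theorem 2.8 of Chapter 2) and uses it as a black box in Lemma~\ref{lemma}. On a compact interval your reconstruction is complete and correct: Markov's inequality on adjacent dyadic increments with threshold $2^{-\gamma n}$, the union bound giving the summable tail $C2^{-n\delta}$ with $\delta=\beta-\gamma\alpha>0$, Borel--Cantelli producing the random level $N(\omega)$, the telescoping/chaining step that yields the sharp constant $2/(1-2^{-\gamma})$ with $h(\omega)=2^{-N(\omega)}$, and the two-limit argument (almost-sure limit along dyadics versus limit in probability from stochastic continuity) showing $\widetilde{X}$ is a modification. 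All of that matches the cited proof.

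The genuine gap is exactly the step you set aside as ``routine but necessary'': passing from $[0,1]$ to the displayed statement with a \emph{single} positive $h(\omega)$ and the supremum over all $0\leqslant s,t<\infty$. Patching gives on each block $[k,k+1]$ its own threshold $h_k(\omega)=2^{-N_k(\omega)}$, but the level-$n$ union bound over the whole half-line involves infinitely many adjacent pairs, so Borel--Cantelli yields no $N(\omega)$ uniform in $k$, and $\inf_k h_k(\omega)$ can be zero; nothing in the hypothesis makes the $N_k$ tight uniformly in $k$. Indeed the half-line form as printed in the paper is false in general: Brownian motion satisfies the hypothesis with $\alpha=4$, $\beta=1$, $C=3$, yet for any fixed constant and any positive random variable $h(\omega)$ the bound fails with probability one, because for each $\delta=1/m$ the ratios $|B_{k+\delta}-B_k|/\delta^{\gamma}$, $k=0,1,2,\dots$, are i.i.d.\ Gaussian (hence unbounded), so by the second Borel--Cantelli lemma infinitely many of them exceed $2/(1-2^{-\gamma})$; choosing $m$ with $1/m<h(\omega)$ produces a violating pair with gap below $h(\omega)$. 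What your proof honestly delivers is the Karatzas--Shreve statement: for every $T<\infty$ a continuous modification that is uniformly H\"older-$\gamma$ on $[0,T]$ for gaps below a $T$-dependent $h_T(\omega)$. The global form quoted here (following Mao's phrasing) overstates the theorem, and you should flag that your chaining cannot, and should not, be expected to recover it --- a point with downstream consequences, since the paper's Lemma~\ref{lemma} invokes precisely the global version to get uniform continuity of $y(t)$ on all of $t\geqslant 0$.
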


\begin{lemma}\label{lemma}
Let (H1) and condition \eqref{con3} hold. Set
$$
y(t):=\int_0^t \left[g(e(s), s)+\phi_2(e(s), s)\right] d W(s) \quad  \quad t \geqslant 0.
$$
Here we write $e\left(t ; e_0\right)$ for $e(t)$. Then $y(t)$ almost every sample path is uniformly continuous on $t \geqslant 0$.
\end{lemma}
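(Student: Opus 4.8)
The plan is to reduce the claim to the Kolmogorov-Centsov criterion (Theorem~\ref{fact1}) by establishing a moment bound of the form $E|y(t)-y(s)|^{p} \le C|t-s|^{1+\beta}$ with $\beta>0$ for the increments of the stochastic integral. Since $y(t)$ is already a continuous stochastic integral, the continuous modification supplied by Theorem~\ref{fact1} coincides with $y(t)$ itself, so the local-uniform H\"older continuity it guarantees will immediately yield uniform continuity of almost every sample path on $[0,\infty)$.

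First I would abbreviate the integrand as $G(s):=g(e(s),s)+\phi_2(e(s),s)$ and note that (H1), through the linear growth condition, gives the pointwise bound $|G(s)|\le 2c(1+|e(s)|)$. For $0\le s<t$ the increment is $y(t)-y(s)=\int_s^t G(r)\,dW(r)$, and I would control its $p$-th moment with the Burkholder-Davis-Gundy inequality: for $p>2$ there is a constant $C_p$ with $E|y(t)-y(s)|^p \le C_p\, E\big[(\int_s^t |G(r)|^2\,dr)^{p/2}\big]$. Applying H\"older's inequality in the time variable with exponents $p/2$ and $p/(p-2)$ pulls the power out, giving $(\int_s^t |G(r)|^2\,dr)^{p/2}\le (t-s)^{p/2-1}\int_s^t |G(r)|^p\,dr$.

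Next I would combine the growth bound with the uniform moment estimate~\eqref{con3}. Because $\sup_{r\ge 0} E|e(r;e_0)|^p<\infty$, the quantity $E|G(r)|^p$ is dominated by a single constant $M$ uniformly in $r$, whence $E|y(t)-y(s)|^p \le C_p M\,(t-s)^{p/2}$. Setting $\alpha=p$ and $\beta=p/2-1$, this is exactly the hypothesis of Theorem~\ref{fact1} with exponent $1+\beta$, and crucially $\beta>0$ precisely because condition~\eqref{con3} is assumed for some $p>2$. This last moment estimate is the main obstacle and the reason the argument hinges on $p>2$ rather than $p=2$: only when $p>2$ does the BDG bound combined with H\"older's inequality produce a time exponent strictly greater than $1$, which is what Kolmogorov-Centsov requires, whereas $p=2$ would force $\beta=0$ and the criterion would fail.

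I would finish by invoking Theorem~\ref{fact1}: the continuous process $y(t)$ inherits local-uniform H\"older continuity with any index $\gamma\in\big(0,(p-2)/(2p)\big)$, so for each $\omega$ outside a null set the increments obey $|y(t)-y(s)|\le \frac{2}{1-2^{-\gamma}}|t-s|^\gamma$ whenever $|t-s|$ is below the random threshold $h(\omega)$. Given any $\varepsilon>0$, choosing $\delta<h(\omega)$ small enough that $\frac{2}{1-2^{-\gamma}}\delta^\gamma<\varepsilon$ shows that this uniform modulus of continuity at small scales is exactly uniform continuity of $y(\cdot,\omega)$ on $[0,\infty)$, completing the proof.
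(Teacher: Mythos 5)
Your proposal is correct and follows essentially the same route as the paper: a $p$-th moment bound on increments of the form $E|y(t)-y(s)|^p \le C(t-s)^{1+(p-2)/2}$, followed by the Kolmogorov--Centsov theorem to obtain local uniform H\"older continuity (with index $\gamma \in (0,(p-2)/(2p))$) and hence uniform continuity of almost every sample path. The only cosmetic difference is that you re-derive the key increment estimate from the Burkholder--Davis--Gundy inequality plus H\"older's inequality, whereas the paper cites the packaged moment inequality for stochastic integrals from Mao's book, which is the same bound with an explicit constant $\left[\tfrac{p(p-1)}{2}\right]^{p/2}$.
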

\begin{proof}
By the moment inequality for stochastic integrals \cite{mao2007stochastic}, for $0 \leqslant s<t<\infty$ and $p>2$,
$$
E|y(t)-y(s)|^p \leqslant\left[\frac{p(p-1)}{2}\right]^{p / 2}(t-s)^{(p-2) / 2} \int_s^t E|g(e(r), r)+\phi_2(e(r), r)|^p d r.
$$
But by (H1) and \eqref{con3} we can derive that
$$
E|g(e(r), r)+\phi_2(e(r), r)|^p \leqslant E[2c(1+|e(r)|)]^p \leqslant(2 c)^p 2^{p-1}\left(1+E|e(r)|^p\right) \leqslant 2^{2p-1} c^p(1+\bar{K}),
$$
where $\bar{K}:=\sup _{0 \leqslant t<\infty} E|e(t)|^p<\infty$. Therefore
$$
E|y(t)-y(s)|^p \leqslant\left[\frac{p(p-1)}{2}\right]^{p / 2}2^{2p-1} c^p(1+\bar{K})(t-s)^{1+(p-2) / 2}.
$$
Considering that $y(t)$ is continuous, it follows from Theorem \ref{fact1} that ${y}(t)$ is locally but uniformly Hölder continuous for almost every sample path. And for every index $\gamma \in(0,(p-2) / 2 p)$, $y(t)$ almost every sample path must be uniformly continuous.
\end{proof}

\begin{lemma}\label{fact2}
\cite{liptser2012theory}

Let $A(t)$ and $U(t)$ be two continuously adapted increasing processes defined on $t \geqslant 0$ with $A(0)=U(0)=0$. Let $M(t)$ be a real-valued continuous local martingale with $M(0)=0$. Let $\xi$ be a non-negative and $\mathscr{F}_0$-measurable random variable. Define
$$
X(t)=\xi+A(t)-U(t)+M(t) \quad \quad t \geqslant 0.
$$
If $X(t)$ is non-negative, then 
$$
\left\{\lim _{t \rightarrow \infty} A(t)<\infty\right\} \subset\left\{\lim _{t \rightarrow \infty} X(t) \text { exists and is finite }\right\} \cap\left\{\lim _{t \rightarrow \infty} U(t)<\infty\right\} \text { a.s. }
$$
In particular, if $\lim _{t \rightarrow \infty} A(t)<\infty$ a.s., then for almost all $\omega \in \Omega$
$$\lim _{t \rightarrow \infty} X(t, \omega)  \text { exists and is finite, and  }\lim _{t \rightarrow \infty} U(t, \omega)<\infty$$.
\end{lemma}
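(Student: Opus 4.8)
\section*{Proof proposal for Lemma \ref{fact2}}

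The plan is to prove this nonnegative semimartingale convergence lemma by localization followed by the almost-sure convergence theorem for nonnegative supermartingales. The single structural idea that drives everything is that the standing hypothesis $X(t)\ge 0$, once combined with an upper bound on the increasing process $A$ obtained by stopping, pins the local martingale $M$ from below; a local martingale that is bounded below is (up to an additive constant) a nonnegative supermartingale, and such processes converge almost surely. Throughout I work with the filtration $(\mathscr{F}_t)_{t\ge0}$ and use that $A,U$ are adapted, continuous, increasing with $A(0)=U(0)=0$, so $A(t)\ge0$ and $U(t)\ge0$ for all $t$.

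First I would localize $A$. For each integer $k\ge1$ set $\tau_k=\inf\{t\ge0:A(t)\ge k\}$, which is a stopping time because $A$ is continuous and adapted; continuity also gives $A(t\wedge\tau_k)\le k$ for every $t$. A one-line check shows $\{\lim_{t\to\infty}A(t)<\infty\}=\bigcup_{k\ge1}\{\tau_k=\infty\}$ exactly: if $\tau_k=\infty$ then $A(t)<k$ for all $t$, while if $A_\infty<\infty$ then any level $k>A_\infty$ is never reached. Next comes the key sign observation: on $[0,\tau_k]$ I rewrite $M(t\wedge\tau_k)=X(t\wedge\tau_k)-\xi-A(t\wedge\tau_k)+U(t\wedge\tau_k)$, and using $X\ge0$, $U\ge0$ and $A(t\wedge\tau_k)\le k$ I obtain $M(t\wedge\tau_k)\ge-\xi-k$. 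Hence $L_k(t):=M(t\wedge\tau_k)+\xi+k$ is nonnegative. Since the stopped process $M(\cdot\wedge\tau_k)$ remains a continuous local martingale and adding the $\mathscr{F}_0$-measurable quantity $\xi+k$ preserves the local-martingale property, $L_k$ is a nonnegative continuous local martingale.

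Then I would invoke convergence. A nonnegative continuous local martingale is a supermartingale by conditional Fatou, and by Doob's supermartingale convergence theorem it converges almost surely to a finite limit; therefore $M(t\wedge\tau_k)$ converges a.s. On the event $\{\tau_k=\infty\}$ one has $M(t\wedge\tau_k)=M(t)$, so $M(t)$ converges a.s. there, and taking the union over $k$ yields a.s. convergence of $M(t)$ on $\{\lim_{t}A(t)<\infty\}$. To finish, I restrict to $\{A_\infty<\infty\}$: there $A(t)\to A_\infty$ and $M(t)\to M_\infty$ are both finite, so $\xi+A(t)+M(t)$ is bounded, and the inequality $U(t)\le\xi+A(t)+M(t)$ (again from $X\ge0$) shows the increasing process $U$ is bounded above, hence $U(t)\uparrow U_\infty<\infty$. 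Consequently $X=\xi+A-U+M$ converges to a finite limit, giving the asserted inclusion. The ``in particular'' clause is immediate, since $\lim_t A(t)<\infty$ a.s. means $\{A_\infty<\infty\}$ exhausts $\Omega$ up to a null set.

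I expect the one genuine subtlety to be the step ``nonnegative local martingale $\Rightarrow$ supermartingale,'' which requires integrability of the initial value $L_k(0)=\xi+k$, whereas $\xi$ is only assumed nonnegative and $\mathscr{F}_0$-measurable. I would remove this obstruction by truncation: for each $m$, the process $\mathbf{1}_{\{\xi<m\}}L_k$ is still a nonnegative local martingale (multiplying by the $\mathscr{F}_0$-set $\{\xi<m\}$ commutes with the conditional expectations), but now has bounded initial value, hence is a bona fide nonnegative supermartingale and converges a.s.; letting $m\to\infty$ recovers convergence of $L_k$ on all of $\Omega$. Everything else is routine bookkeeping: verifying that $\tau_k$ is a stopping time, that stopping and the $\mathscr{F}_0$-shift preserve the local-martingale structure, and that the exhaustion $\bigcup_k\{\tau_k=\infty\}$ is exactly $\{A_\infty<\infty\}$.
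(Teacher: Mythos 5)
Your proof is correct. Note first that the paper itself does not prove Lemma~\ref{fact2}: it is quoted with a citation to Liptser and Shiryaev, so there is no internal proof to compare against. Your argument --- stopping at $\tau_k=\inf\{t:A(t)\geqslant k\}$, using $X\geqslant 0$ and $U\geqslant 0$ to bound the stopped local martingale below by $-\xi-k$, invoking the a.s.\ convergence of nonnegative supermartingales, exhausting $\{\lim_{t\to\infty}A(t)<\infty\}=\bigcup_k\{\tau_k=\infty\}$, and then reading off boundedness of the increasing process $U$ from $U(t)\leqslant\xi+A(t)+M(t)$ --- is precisely the standard textbook proof of this semimartingale convergence lemma, essentially the one in the cited reference. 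You also correctly flagged and repaired the only genuine subtlety, namely that $\xi$ need not be integrable so $M(t\wedge\tau_k)+\xi+k$ is not immediately a local martingale in the strict sense; truncating on the $\mathscr{F}_0$-events $\{\xi<m\}$, which commutes with conditioning and with stopping, fixes this cleanly, with the only implicit assumption being that $\xi$ is a.s.\ finite, which is part of its being a real-valued random variable.
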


The following is based on the above theorem\ref{fact1}, lemma \ref{lemma}-\ref{fact2}, and condition\eqref{con1}-\eqref{con3}to prove the proposition\ref{pro1}.
\begin{proof}
Fix any initial value $e_0$ and for simplicity write $e\left(t ; e_0\right)=e(t)$. By $It\hat{o}^{'}s$ formula and condition\eqref{con2}
\begin{equation*}
\begin{aligned}
V(e(t), t)= & V\left(e_0, 0\right) + \int_0^t\left\{ V_t(e,s)+V_e(e,s)\left[f(e,s)+\phi_1(e,s)\right]\right\}ds\\
+&\int_0^t\frac{1}{2} \operatorname{trace}\left([g(e, s)+\phi_2(e,s)]^T V_{x x}(e, s) [g(e, s)+\phi_2(e,s)]\right) d s\\
& +\int_0^t V_e(e, s) [g(e, s)+\phi_2(e,s)] d W(s)\\
=& V\left(e_0, 0\right) + \int_0^t L V(e(s), s) d s +\int_0^t V_e(e(s), s) [g(e(s), s)+\phi_2(e(s),s)] d W(s) \\
= & V\left(e_0, 0\right)+\int_0^t \gamma(s) d s-\int_0^t[\gamma(s)-L V(e(s), s)] d s \\
+&\int_0^t V_e(e(s), s) [g(e(s), s)+\phi_2(e(s),s)] d W(s) \\
\leqslant & V\left(e_0, 0\right)+\int_0^t \gamma(s) d s-\int_0^t w(e(s)) d s +\int_0^t V_e(e(s), s) [g(e(s), s)+\phi_2(e(s),s)] d W(s) .
\end{aligned}
\end{equation*}
Since $\int_0^{\infty} \gamma(s) d s<\infty$ and the condition \eqref{con2}, we have , by Lemma \ref{fact2}, that for almost all $\omega \in \Omega$
\begin{equation}\label{fact}
\int_0^{\infty} w(e(t, \omega)) d t<\infty,
\end{equation}
and
$$
\lim _{t \rightarrow \infty} V\left(e\left(t ; e_0\right), t\right) \text { exists and is finite.}
$$
Conclusion \eqref{clu1} obtained.

For almost surely (a.s.) in the second conclusion, we first need to construct a collection of samples such that the conclusion \eqref{clu2} holds exactly on these samples.
Under the condition \eqref{con3}, let the
$$
y(t):=\int_0^t [g(e(s), s)+\phi_2(e(s), s)] d W(s) \quad \text { on } \quad t \geqslant 0.
$$
By Lemma \ref{lemma}, it follows that almost every sample path of $y(t)$ on $t \geqslant 0$ is uniformly continuous.
Based on the facts \eqref{fact} and conclusions \eqref{clu1} deduced above, we can choose a subset $\bar{\Omega} \subseteq \Omega$ that satisfies $P(\bar{\Omega})=1$ such that, for each sample $\omega \in \bar{\Omega}$, the facts\eqref{fact} and conclusion \eqref{clu1} both hold, and $y(t, \omega)$ is uniformly continuous on $t \geqslant 0$. Fix any $\omega$ in the set $\bar{\Omega}$. Based on the conclusion \eqref{clu1}, we have
$$
\sup _{0 \leqslant t<\infty} V(e(t, \omega), t)<\infty.
$$
Thus, under condition \eqref{con1}, there exists a positive random variable $h(\omega)$ such that
$$
|e(t, \omega)| \leqslant h(\omega) \quad \quad t \geqslant 0.
$$
Set
$$
z(t)=\int_0^t [f(e(s), s)+\phi_1(e(s),s)] d s.
$$
From Assumption (H1) we can calculate that for any $0 \leqslant s<t<\infty$) there are
$$
\begin{aligned}
|z(t, \omega)-z(s, \omega)| & \leqslant \int_s^t[|f(e(r, \omega), r)|+|\phi_1(e(r, \omega), r)|] d r \\
& \leqslant 2c \int_s^t(1+|e(r, \omega)|) d r \leqslant 2c(1+h(\omega))(t-s),
\end{aligned}
$$
This shows that $z(t, \omega)$ is uniformly continuous on $t \geqslant 0$.
According to the formula \eqref{e_dot}, we have $e(t)=e_0+z(t)+y(t)$.
Since $\omega \in \bar{\Omega}$ is arbitrarily chosen, then for any $\omega \in \bar{\Omega}, e(t, \omega)$ is uniformly continuous on $t \geqslant 0$.

It follows that the conclusion \eqref{clu2} holds on the set $\bar{\Omega}$, i.e.
$$
\lim _{t \rightarrow \infty} w(e(t, \omega))=0 \quad \quad \omega \in \bar{\Omega} .
$$
Using the counterfactual, suppose there exists $\hat{\omega} \in \bar{\Omega}$ such that
$$
\limsup _{t \rightarrow \infty} w(e(t, \hat{\omega}))>0.
$$
Then there exists $\varepsilon>0$ and a positive increasing sequence $\left\{t_k\right\}_{k \geqslant 1}$ ($t_k+1<t_{k+1}$) such that
\begin{equation}\label{9}
w\left(e\left(t_k, \hat{\omega}\right)\right)>\varepsilon \quad \quad k \geqslant 1.
\end{equation}
Set $\bar{S}_h=\left\{e \in R^n:|e| \leqslant h(\hat{\omega})\right\}$, where the random variable $h(\hat{\omega})$ has been defined above, then there is $\{e(t, \hat{\omega}): t \geqslant 0\} \subset \bar{S}_h$. Since the function $w(\cdot)$ is continuous, it must be uniformly continuous on $\bar{S}_h$, then there exists $\delta_1>0$ such that
$$
|w(e_1)-w(e_2)|<\frac{\varepsilon}{2} \quad \text {, if } \quad e_1, e_2 \in \bar{S}_h, \quad|e_1-e_2|<\delta_1.
$$
On the other hand, $e(t, \hat{\omega})$ is uniformly continuous on $t \geqslant 0$, then we can find the constant $\delta_2 \in(0,1)$ such that
$$
|e(t, \hat{\omega})-e(s, \hat{\omega})|<\delta_1 \quad \text {, if  } \quad 0 \leqslant t, s<\infty, \quad|t-s| \leqslant \delta_2.
$$
Thus, we can get that for any $n \geqslant 1$
$$
\left|w\left(e\left(t_k, \hat{\omega}\right)\right)-w(e(t, \hat{\omega}))\right|<\frac{\varepsilon}{2} \quad \text {, if } \quad t_k \leqslant t \leqslant t_k+\delta_2 .
$$
Combined with Eq. \eqref{9}, we get
$$
w(e(t, \hat{\omega})) \geqslant w\left(e\left(t_k, \hat{\omega}\right)\right)-\left|w\left(e\left(t_k, \hat{\omega}\right)\right)-w(e(t, \hat{\omega}))\right|>\varepsilon-\frac{\varepsilon}{2}=\frac{\varepsilon}{2} .
$$
Therefore
$$
\int_0^{\infty} w(e(t, \hat{\omega})) d t \geqslant \sum_{k=1}^{\infty} \int_{t_k}^{t_k+\delta_2} w(e(t, \hat{\omega})) d t \geqslant \sum_{k=1}^{\infty} \frac{\varepsilon \delta_2}{2}=\infty.
$$
However, this result is completely contrary to the conclusion \eqref{clu1}. Therefore, it can be concluded that \eqref{clu2} holds for all $w \in \bar{\Omega}$, i.e., the conclusion \eqref{clu2} holds almost surely. The proposition is proved.
\end{proof}

It is clear from the above proof that the condition \eqref{con3} is only used to prove that the
$$
y(t):=\int_0^t [g(e(s), s)+\phi_2(e(s), s)] d W(s),
$$
almost every sample path on $t \geqslant 0$ is uniformly continuous. Therefore, this condition can be replaced by other conditions. One of the most commonly used alternative conditions is that the diffusion term $g(e,t)$ is bounded. Furthermore, going further and investigating the extension of Lassalle's theorem to controlled SDDEs one can draw on the work of Mao (2002)\cite{Mao2002ANO}.
\appendix
\subsection{Figure Supplement}
\begin{figure}[htbp]
    \centering
    \includegraphics[width=\textwidth]{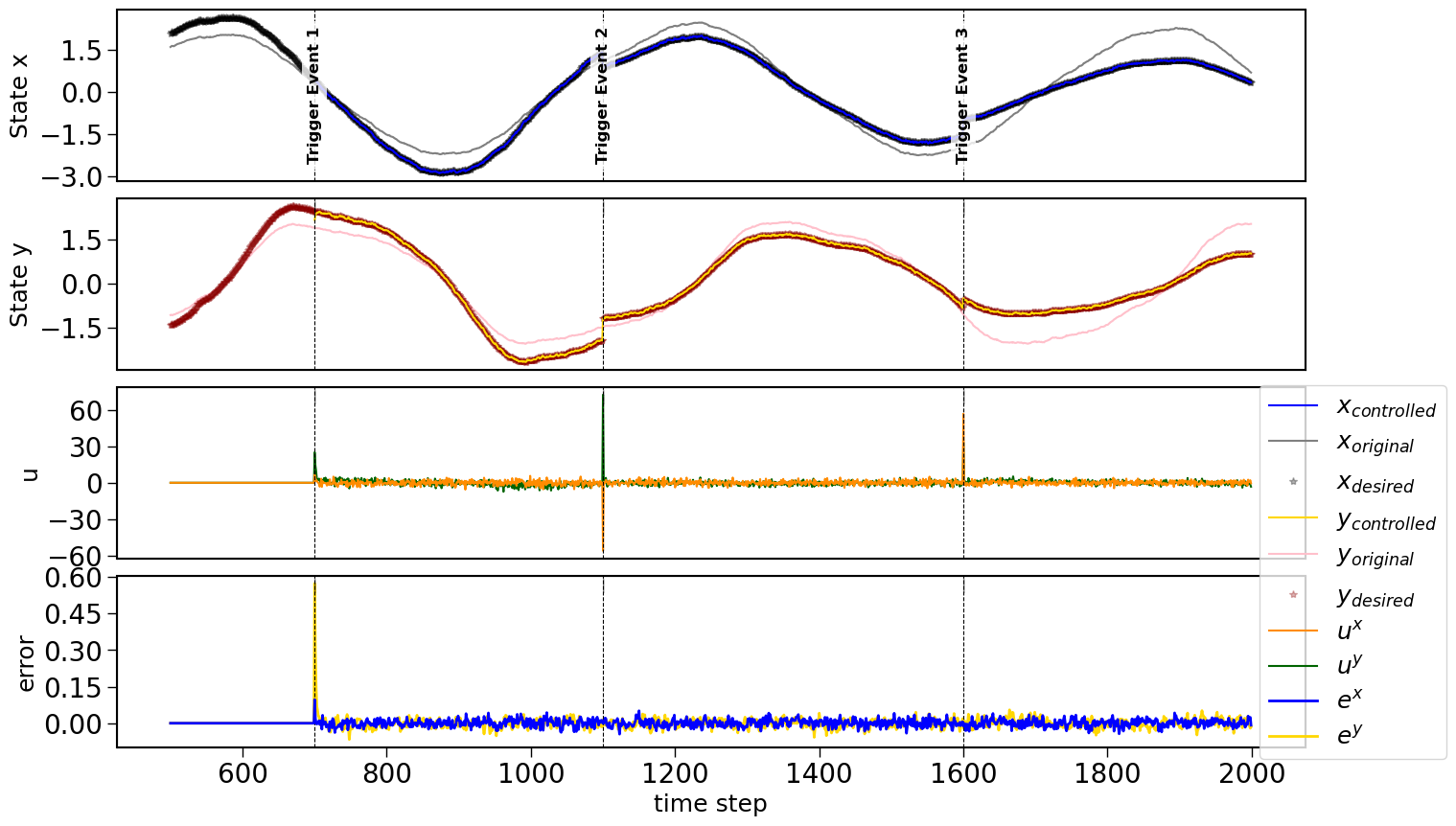}
    \caption{Control performance of the stochastic NG-RC under low-intensity noise without multiple time scales. \\
    a) The three vertical black dotted lines mark key points: (1) the time when control is started, (2) the first transition in the desired trajectory, and (3) the second transition in the desired trajectory. \\
    b) The top row shows the control results for the $x$-variable under the stochastic NG-RC. The grey curve represents the original trajectory of the Van der Pol (VDP) system. The black pentagram indicates the desired trajectory: before the second dashed line, the trajectory is an outward expansion ( $1.2 \times$ amplitude); after the second dashed line, it contracts to $0.8 \times$ the original amplitude, and further contracts to $0.5 \times$ after the third dashed line. The blue curves indicate the tracking results of the stochastic NG-RC controller, wihch start from the first dashed line. \\
    c) The second row shows the control results for the $y$-variable under the stochastic NG-RC. The pink curve represents the original trajectory, the red pentagram marks the desired trajectory, and the gold curve shows the controller's tracking response. \\
    d) The third row displays the feedback control signals $u^x$ variable (in orange) and $u^y$ variable (in green), which are computed inversely by the stochastic NG-RC controller based on the desired trajectory. \\
    e) The bottom row presents the control error $e_i \in \mathbb{R}^2$ at each step, reflecting the difference between the system state and the desired trajectory.}
    \label{fig:2d_add control}
\end{figure}

\bibliographystyle{unsrt}
\bibliography{main}

\begin{thebibliography}{10}

\bibitem{Hayashi+1986}
Chihiro Hayashi.
\newblock {\em Nonlinear Oscillations in Physical Systems}.
\newblock Princeton University Press, Princeton, 1986.

\bibitem{Mohazzabi2004TheoryAE}
Pirooz Mohazzabi.
\newblock Theory and examples of intrinsically nonlinear oscillators.
\newblock {\em American Journal of Physics}, 72:492--498, 2004.

\bibitem{Rafiq2024CriticalAF}
Muhammad Rafiq, Muhammad Kamran, Hijaz Ahmad, and Afis Saliu.
\newblock Critical analysis for nonlinear oscillations by least square hpm.
\newblock {\em Scientific Reports}, 14, 2024.

\bibitem{ashall2009pulsatile}
Louise Ashall, Caroline~A Horton, David~E Nelson, Pawel Paszek, Claire~V Harper, Kate Sillitoe, Sheila Ryan, David~G Spiller, John~F Unitt, David~S Broomhead, et~al.
\newblock Pulsatile stimulation determines timing and specificity of nf-$\kappa$b-dependent transcription.
\newblock {\em Science}, 324(5924):242--246, 2009.

\bibitem{gerard2009temporal}
Claude G{\'e}rard and Albert Goldbeter.
\newblock Temporal self-organization of the cyclin/cdk network driving the mammalian cell cycle.
\newblock {\em Proceedings of the National Academy of Sciences}, 106(51):21643--21648, 2009.

\bibitem{goldbeter2012systems}
Albert Goldbeter, Claude G{\'e}rard, Didier Gonze, J-C Leloup, and Genevieve Dupont.
\newblock Systems biology of cellular rhythms.
\newblock {\em FEBS letters}, 586(18):2955--2965, 2012.

\bibitem{Ajjarapu1991BifurcationTA}
V.~Ajjarapu and B.~H. Lee.
\newblock Bifurcation theory and its application to nonlinear dynamical phenomena in an electrical power system.
\newblock {\em [Proceedings] Conference Papers 1991 Power Industry Computer Application Conference}, pages 312--319, 1991.

\bibitem{Manchester2010TransverseDA}
Ian~R. Manchester.
\newblock Transverse dynamics and regions of stability for nonlinear hybrid limit cycles.
\newblock {\em IFAC Proceedings Volumes}, 44:6285--6290, 2010.

\bibitem{Zanotti2002QuasiperiodicAA}
Olindo Zanotti, Luciano Rezzolla, and Jos{\'e}~A. Font.
\newblock Quasi-periodic accretion and gravitational waves from oscillating 'toroidal neutron stars' around a schwarzschild black hole.
\newblock {\em Monthly Notices of the Royal Astronomical Society}, 341:832--848, 2002.

\bibitem{Belhaq1999QuasiPeriodicOC}
Mohamed Belhaq and Mohamed Houssni.
\newblock Quasi-periodic oscillations, chaos and suppression of chaos in a nonlinear oscillator driven by parametric and external excitations.
\newblock {\em Nonlinear Dynamics}, 18:1--24, 1999.

\bibitem{xu2024data}
Yong Xu, Stefano Lenci, Yongge Li, and J{\"u}rgen Kurths.
\newblock Data-driven nonlinear and stochastic dynamics with control, 2024.

\bibitem{Zhu2020StabilityAF}
Quanxin Zhu and Tingwen Huang.
\newblock Stability analysis for a class of stochastic delay nonlinear systems driven by g-brownian motion.
\newblock {\em Syst. Control. Lett.}, 140:104699, 2020.

\bibitem{Bai2020safety}
Bai Xue, Martin Fr{\"a}nzle, Naijun Zhan, Sergiy Bogomolov, and Bican Xia.
\newblock Safety verification for random ordinary differential equations.
\newblock {\em IEEE Transactions on Computer-Aided Design of Integrated Circuits and Systems}, 39(11):4090--4101, 2020.

\bibitem{Zhang2024ActionFA}
Peng Zhang, Ting Gao, Jinqiu Guo, and Jinqiao Duan.
\newblock Action functional as an early warning indicator in the space of probability measures via schr{\"o}dinger bridge.
\newblock {\em Quantitative Biology}, 2024.

\bibitem{kidger2021neural}
Patrick Kidger, James Foster, Xuechen Li, and Terry~J Lyons.
\newblock Neural sdes as infinite-dimensional gans.
\newblock In {\em International conference on machine learning}, pages 5453--5463. PMLR, 2021.

\bibitem{kidger2021efficient}
Patrick Kidger, James Foster, Xuechen~Chen Li, and Terry Lyons.
\newblock Efficient and accurate gradients for neural sdes.
\newblock {\em Advances in Neural Information Processing Systems}, 34:18747--18761, 2021.

\bibitem{tanaka2019recent}
Gouhei Tanaka, Toshiyuki Yamane, Jean~Benoit H{\'e}roux, Ryosho Nakane, Naoki Kanazawa, Seiji Takeda, Hidetoshi Numata, Daiju Nakano, and Akira Hirose.
\newblock Recent advances in physical reservoir computing: A review.
\newblock {\em Neural Networks}, 115:100--123, 2019.

\bibitem{canaday2021model}
Daniel Canaday, Andrew Pomerance, and Daniel~J Gauthier.
\newblock Model-free control of dynamical systems with deep reservoir computing.
\newblock {\em Journal of Physics: Complexity}, 2(3):035025, 2021.

\bibitem{gauthier2021next}
Daniel~J Gauthier, Erik Bollt, Aaron Griffith, and Wendson~AS Barbosa.
\newblock Next generation reservoir computing.
\newblock {\em Nature communications}, 12(1):1--8, 2021.

\bibitem{kent2024controlling}
Robert~M Kent, Wendson~AS Barbosa, and Daniel~J Gauthier.
\newblock Controlling chaos using edge computing hardware.
\newblock {\em Nature Communications}, 15(1):3886, 2024.

\bibitem{kent2024controlling2}
Robert~M Kent, Wendson~AS Barbosa, and Daniel~J Gauthier.
\newblock Controlling chaotic maps using next-generation reservoir computing.
\newblock {\em Chaos: An Interdisciplinary Journal of Nonlinear Science}, 34(2), 2024.

\bibitem{colgin2013mechanisms}
Laura~Lee Colgin.
\newblock Mechanisms and functions of theta rhythms.
\newblock {\em Annual review of neuroscience}, 36(1):295--312, 2013.

\bibitem{di2008physiological}
V~Di~Lazzaro, F~Pilato, M~Dileone, P~Profice, A~Oliviero, P~Mazzone, A~Insola, F~Ranieri, M~Meglio, PA~Tonali, et~al.
\newblock The physiological basis of the effects of intermittent theta burst stimulation of the human motor cortex.
\newblock {\em The Journal of physiology}, 586(16):3871--3879, 2008.

\bibitem{ge2014achieving}
Tian Ge, Xiaoying Tian, J{\"u}rgen Kurths, Jianfeng Feng, and Wei Lin.
\newblock Achieving modulated oscillations by feedback control.
\newblock {\em Physical Review E}, 90(2):022909, 2014.

\bibitem{qin2021frequency}
Bo-Wei Qin, Lei Zhao, and Wei Lin.
\newblock A frequency-amplitude coordinator and its optimal energy consumption for biological oscillators.
\newblock {\em Nature Communications}, 12(1):5894, 2021.

\bibitem{Papageorgiou2024SlidingmodeCO}
Dimitrios Papageorgiou, Guðr{\'u}n Þ{\'o}ra Sigurðard{\'o}ttir, Egidio Falotico, and Silvia Tolu.
\newblock Sliding-mode control of a soft robot based on data-driven sparse identification.
\newblock {\em Control Engineering Practice}, 2024.

\bibitem{Brunton2016SparseIO}
Steven~L. Brunton, Joshua~L. Proctor, and J.~Nathan Kutz.
\newblock Sparse identification of nonlinear dynamics with control (sindyc).
\newblock {\em arXiv: Dynamical Systems}, 2016.

\bibitem{Zhang2018OnTC}
Linan Zhang and Hayden Schaeffer.
\newblock On the convergence of the sindy algorithm.
\newblock {\em Multiscale Model. Simul.}, 17:948--972, 2018.

\bibitem{Wang2023AnIK}
Jiajin Wang, Baoguo Xu, Jian yang Lai, Yifei Wang, Cong Hu, Huijun Li, and Aiguo Song.
\newblock An improved koopman-mpc framework for data-driven modeling and control of soft actuators.
\newblock {\em IEEE Robotics and Automation Letters}, 8:616--623, 2023.

\bibitem{Guevara2024PDBasedAS}
Bryan~S. Guevara, Viviana Moya, Daniel~C. Gandolfo, and Juan~Marcos Toibero.
\newblock Pd-based and sindy nonlinear dynamics identification of uavs for mpc design.
\newblock {\em ArXiv}, abs/2410.11791, 2024.

\bibitem{Brunton2015DiscoveringGE}
Steven~L. Brunton, Joshua~L. Proctor, and J.~Nathan Kutz.
\newblock Discovering governing equations from data by sparse identification of nonlinear dynamical systems.
\newblock {\em Proceedings of the National Academy of Sciences}, 113:3932 -- 3937, 2015.

\bibitem{noakes1991takens}
Lyle Noakes.
\newblock The takens embedding theorem.
\newblock {\em International Journal of Bifurcation and Chaos}, 1(04):867--872, 1991.

\bibitem{Rudy2016DatadrivenDO}
Samuel~H. Rudy, Steven~L. Brunton, Joshua~L. Proctor, and J.~Nathan Kutz.
\newblock Data-driven discovery of partial differential equations.
\newblock {\em Science Advances}, 3, 2016.

\bibitem{sarangapani2018neural}
Jagannathan Sarangapani.
\newblock {\em Neural network control of nonlinear discrete-time systems}.
\newblock CRC press, 2018.

\bibitem{Lasalle1976TheSO}
Joseph~P. Lasalle.
\newblock The stability of dynamical systems.
\newblock 1976.

\bibitem{tang2022periodic}
Fanghua Tang, Ben Niu, Guangdeng Zong, Xudong Zhao, and Ning Xu.
\newblock Periodic event-triggered adaptive tracking control design for nonlinear discrete-time systems via reinforcement learning.
\newblock {\em Neural Networks}, 154:43--55, 2022.

\bibitem{yi2018dynamic}
Xinlei Yi, Kun Liu, Dimos~V Dimarogonas, and Karl~H Johansson.
\newblock Dynamic event-triggered and self-triggered control for multi-agent systems.
\newblock {\em IEEE Transactions on Automatic Control}, 64(8):3300--3307, 2018.

\bibitem{Mao1999StochasticVO}
Xuerong Mao.
\newblock Stochastic versions of the lasalle theorem.
\newblock {\em Journal of Differential Equations}, 153:175--195, 1999.

\bibitem{karatzas1991brownian}
Ioannis Karatzas and Steven Shreve.
\newblock {\em Brownian motion and stochastic calculus}, volume 113.
\newblock Springer Science \& Business Media, 1991.

\bibitem{mao2007stochastic}
Xuerong Mao.
\newblock {\em Stochastic differential equations and applications}.
\newblock Elsevier, 2007.

\bibitem{liptser2012theory}
Robert Liptser and Albert~Nikolaevich Shiryayev.
\newblock {\em Theory of martingales}, volume~49.
\newblock Springer Science \& Business Media, 2012.

\bibitem{Mao2002ANO}
Xuerong Mao.
\newblock A note on the lasalle-type theorems for stochastic differential delay equations.
\newblock {\em Journal of Mathematical Analysis and Applications}, 268:125--142, 2002.

\end{thebibliography}

\end{document}